 \long\def\@makefntext#1{\parindent .8em\noindent
  \hbox to 1.1em{\hss $^{\@thefnmark}$}#1}
\newtheorem{theorem}{Theorem}[section]
\newtheorem{proposition}[theorem]{Proposition}
\newtheorem{lemma}[theorem]{Lemma}
\newtheorem{corollary}[theorem]{Corollary}
\newtheorem{proof}{\textmd{\textit{Proof.}}}
\newtheorem{remark}[theorem]{Remark}
\newtheorem{definition}[theorem]{Definition}
\newtheorem{acknowledgement}{\textmd{\textit{Acknowledgements.}}}
\newcommand{\qedd}{\hfill \Box}
\renewcommand{\det}{\ensuremath{\mathrm{det}}}
\newcommand{\R}{\ensuremath{\mathbb{R}}}
\newcommand{\Ord}{\ensuremath{\mathcal{O}}}
\newcommand{\LL}{\ensuremath{\mathcal{L}}}
\newcommand{\NN}{\ensuremath{\mathcal{N}}}
\newcommand{\PP}{\ensuremath{\mathcal{P}}}
\newcommand{\G}{\ensuremath{\mathcal{G}}}
\def\div{\mathop{\mathrm{div}}\nolimits}
\def\Hess{\mathop{\mathrm{Hess}}\nolimits}
\def\tr{\mathop{\mathrm{tr}}\nolimits}
\def\Sym{\mathop{\mathrm{S}}\nolimits}
\def\id{\mathop{\mathrm{id}}\nolimits}
\newcommand{\lr}[2]{\left\langle{#1},{#2}\right\rangle}
\title{Behaviors of $\varphi$-exponential distributions in Wasserstein geometry and an evolution equation}
\author{Asuka Takatsu
  \thanks{Graduate School of Mathematics, Nagoya University, Nagoya 464-8602, Japan \& Institut des Hautes \'Etudes Scientifiques, Bures-sur-Yvette 91440, France.
({\sf takatsu@math.nagoya-u.ac.jp} )} \\}
\date{\empty}
\begin{document}
\maketitle
\begin{abstract}
A $\varphi$-exponential distribution is a generalization of an exponential distribution associated to functions $\varphi$ in an appropriate class, 
and the space of $\varphi$-exponential distributions has a dually flat structure. 
We study features of the space of $\varphi$-exponential distributions, such as the convexity in Wasserstein geometry and the stability under an evolution equation.
From this study, we provide the new characterizations to the space of Gaussian measures and the space of $q$-Gaussian measures.
\footnote[0]{
{\bf Mathematics Subject Classification (2010): }60D05, 94A17.}
\footnote[0]{
{\bf keywords:} $\varphi$-exponential distribution, $q$-Gaussian measure, Wasserstein geometry, evolution equation.} 
\end{abstract}
\section{Introduction}
A Gaussian measure is an exponential distribution on $\R^d$ with mean and covariance matrix parameters.
The space of Gaussian measures  has good behaviors such as   
the flatness in a dual structure which is a Riemannian metric with a pair of connections being orthogonal to each other, 
the convexity in Wasserstein geometry which is a metric geometry on the space of probability measures, 
and the stability under the linear evolution equation called the Fokker--Planck equation.
As for the validity of these behaviors, for instance, 
the dually flat structure on the space of Gaussian measures leads the Cram\'er--Rao lower bound for any unbiased estimator.

Recently,  Naudts~\cite{N} introduced the space of probability measures, which is a generalization of the space of Gaussian measures equipped with a dually flat structure, and generalized the Cram\'er--Rao lower bound,
where each probability measure is determined by the inverse function of the function $t \mapsto \int_1^{t} 1/{\varphi(s)} ds$ on $(0,\infty)$ for an increasing, positive, continuous function $\varphi$ on $(0,\infty)$.
The probability measure is called a {\it $\varphi$-exponential distribution} and coincides with a usual exponential distribution when $\varphi(s)=s$.
The $\varphi$-exponential distribution on $\R^d$ has a finite second moment  if $\varphi$ has a certain limiting  behavior at $0$ and $\infty$,   
which corresponds to the instances of functions $\varphi$ given by $\varphi(s)=s^q$ with $q\in (0,(d+4)/(d+2))$.
When $\varphi(s)=s^q$ except for $q =1$, 
the $\varphi$-exponential distribution with mean and covariance matrix parameters is called a {\it $q$-Gaussian measure}, 
which is a power-law distribution.

The space of $q$-Gaussian measures behaves well due to the convenience of  power-law distributions.
Indeed, the space of $q$-Gaussian measures is  convex in Wasserstein geometry and  stable under the nonlinear evolution equation of porous medium type
(see~\cite[Proposition~5]{OW} and~\cite[Theorem~A]{qT}).
We prove that the convexity and the stability are features only of the space of Gaussian measures and the space of $q$-Gaussian measures. 
In other words,  the condition $\varphi(s)=s^q$ with $q\in (0,(d+4)/(d+2))$ is a necessary and sufficient condition for the space of $\varphi$-exponential distributions on $\R^d$ with mean and covariance matrix parameters to have each of the two  features.

The paper is organized as follows. 
In the next section, we give definitions and properties about  Wasserstein geometry and the $\varphi$-exponential function.
Section~3 concerns a condition for $\varphi$-exponential distributions to have a finite second moment
(Proposition~\ref{kesu}).
The two spaces of $\varphi$-exponential distributions with mean and covariance matrix parameters are introduced in Section~4, 
one of which has a dually flat structure and the other is convex in Wasserstein geometry (Proposition~\ref{isom}).
We discuss when the two spaces coincide with each other (Theorem~\ref{icchi}).
In Section~5, we consider the stability of the two spaces under a certain evolution equation (Theorem~\ref{th:stable} and Corollary~\ref{cor} ).
\begin{acknowledgement}
The author would like to thank Shin-ichi Ohta for many valuable advice and discussions.
She is also grateful to Sumio Yamada for helpful comments.
She is partially supported by JSPS-IH\'ES (EPDI)  fellowship.
\end{acknowledgement}

\section{Preliminaries}\label{sc:pre}

\subsection{Wasserstein geometry}
Let us briefly recall  properties  of ($L^2$-)Wasserstein geometry over $\R^d$.
See \cite{Vi1,Vi2} and references therein for further information.

In this paper, any measure is always supposed to be a Borel measure.
A measure $\mu$ on $\R^d$ is said to be {\it absolutely continuous} with respect to the Lebesgue measure $\LL^d$ on $\R^d$ if there exists a measurable function $\rho$ on $\R^d$ such that $\mu= \rho \LL^d$.
For a measurable map $F$ on $\R^d$ and a  measure $\mu$ on $\R^d$, 
the {\it push-forward measure} $F_{\sharp}\mu$ of $\mu$ thorough $F$ is a measure on $\R^d$ defined by  $F_{\sharp}\mu[B]:=\mu[F^{-1}(B)]$ for all Borel sets $B \subset \R^d$.
Given two measures $\mu=\rho \LL^d, \nu=\sigma\LL^d$ and a differentiable map $F$ on $\R^d$, 
$\nu=F_\sharp \mu$ is equivalent to $\rho=\sigma(F) \det(d F)$ $\mu$-almost everywhere,  where $dF$ is the total differential of $F$.
For any probability measures $\mu$ and $\nu$ on $\R^d$, a {\it coupling} $\pi$ of $\mu$ and $\nu$ is 
a  probability measure on $\R^d \times \R^d$ with marginals $\mu$ and $\nu$.
In other words, we have $\pi[B \times \R^d]=\mu[B]$ and $\pi[\R^d \times B]=\nu[B]$ for all Borel sets $B \subset \R^d$.
We denote by $\PP_2$ the set of probability measures $\mu$ on $\R^d$ with finite second moments, namely 
\[
 \int_{\R^d} |x|^2 d\mu (x) < \infty.
\]

\begin{definition}\label{df:Wass}
We define the {\it Wasserstein distance}  between $\mu$ and $\nu$ in $\PP_2$ by
\begin{equation}\label{defeq:wass}
W_2(\mu,\nu):=\inf\left\{ \left( \int _{\R^d\times \R^d} |x-y|^2 d \pi (x,y) \right)^{\frac12} \Biggm|  \text{$\pi$ : a coupling of $\mu$ and $\nu$}  \right\}. 
\end{equation}
\end{definition}
The function $W_2$ is indeed a distance function on $\PP_2$.
The metric space $(\PP_2,W_2)$ is called the {\it Wasserstein space}, where  any two points are joined by a geodesic. 
Here a {\it geodesic} is a distance-minimizing curve and has a constant speed. 
It is known that any Wasserstein geodesic does not branch, that is,  if two geodesics have a common interval, then they are subintervals of one geodesic.
Moreover, a coupling  achieving the infimum of~\eqref{defeq:wass} always exists, which is called an {\it optimal coupling}.
For these facts, see~\cite[Chapters~6,7]{Vi2}.
We refer to the results of Knott--Smith~\cite{KS} and Brenier~\cite{Bre} about optimal couplings.
We denote by $\id$ the identity map on $\R^d$.
\begin{theorem} {\rm (\cite[Theorems~2.12, 2.16]{Vi1})}\label{thm:bre}
Let $\mu,\nu \in\PP_2$ and $\mu$ be absolutely continuous with respect to the Lebesgue measure. 
If a proper lower semi-continuous convex function $\phi$ on $\R^d$ 
satisfies $[\nabla \phi]_{\sharp} \mu=\nu$,
then $[\id \times  \nabla \phi]_{\sharp} \mu$ is a unique optimal coupling of $\mu$ and $\nu$, and 
$\{[(1-t)\id +t \nabla \phi]_{\sharp} \mu\}_{t\in[0,1]}$ is a unique Wasserstein geodesic from $\mu$ to $\nu$.  
\end{theorem}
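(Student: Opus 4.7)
The natural approach is to invoke Kantorovich duality for the quadratic cost together with the Legendre--Fenchel duality between convex functions, which makes convex potentials the natural dual maximizers. Since
\[
\int_{\R^d\times\R^d}|x-y|^2\,d\pi = \int|x|^2\,d\mu + \int|y|^2\,d\nu - 2\int\langle x,y\rangle\,d\pi
\]
for every coupling $\pi$ of $\mu$ and $\nu$, minimizing the transport cost is equivalent to maximizing $\int\langle x,y\rangle\,d\pi$, and only this bilinear term depends on $\pi$.

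The heart of the argument is Young's inequality: for any proper lower semi-continuous convex $\phi$ with Legendre conjugate $\phi^*$ one has $\langle x,y\rangle\le \phi(x)+\phi^*(y)$, with equality precisely when $y\in\partial\phi(x)$. Integrating against an arbitrary coupling yields the upper bound $\int\langle x,y\rangle\,d\pi \le \int\phi\,d\mu+\int\phi^*\,d\nu$, whereas the particular coupling $\pi_0:=[\id\times\nabla\phi]_\sharp\mu$ realizes equality: $\nabla\phi(x)\in\partial\phi(x)$ $\mu$-a.e., and the hypothesis $[\nabla\phi]_\sharp\mu=\nu$ rewrites $\int\phi^*\,d\nu=\int\phi^*(\nabla\phi)\,d\mu$. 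Hence $\pi_0$ is optimal. For uniqueness I would appeal to the almost-everywhere differentiability of a proper convex function; combined with the absolute continuity of $\mu$ this gives $\partial\phi(x)=\{\nabla\phi(x)\}$ for $\mu$-a.e.\ $x$, so any optimizer $\pi'$, forced by the equality case of Young to be concentrated on the graph of $\partial\phi$, must coincide with $\pi_0$.

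For the geodesic statement, set $T_t:=(1-t)\id+t\nabla\phi$ and $\mu_t:=(T_t)_\sharp\mu$. The coupling $(T_s\times T_t)_\sharp\mu$ between $\mu_s$ and $\mu_t$ has cost $(t-s)^2\int|x-\nabla\phi(x)|^2\,d\mu=(t-s)^2 W_2(\mu,\nu)^2$, so $W_2(\mu_s,\mu_t)\le|t-s|\,W_2(\mu,\nu)$ for all $s,t\in[0,1]$. Chaining the triangle inequality $W_2(\mu,\nu)\le W_2(\mu,\mu_s)+W_2(\mu_s,\mu_t)+W_2(\mu_t,\nu)$ with these three upper bounds, which sum exactly to $W_2(\mu,\nu)$, forces all three inequalities to be equalities, so $(\mu_t)_{t\in[0,1]}$ is a constant-speed geodesic; uniqueness follows from the non-branching property recalled just above together with the uniqueness of the endpoint optimal coupling $\pi_0$. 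The main technical obstacle is justifying the integrations involving $\phi$ and $\phi^*$, which are not a priori integrable against $\mu$ and $\nu$; the standard remedy is to first reduce to compactly supported marginals by a truncation argument and then pass to the limit, or equivalently to work throughout with the $c$-cyclical monotonicity characterization of optimality and recover the convex potential a posteriori.
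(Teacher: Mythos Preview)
The paper does not supply its own proof of this theorem; it is quoted as a known result with the citation \cite[Theorems~2.12, 2.16]{Vi1} (originally Knott--Smith and Brenier), so there is no ``paper's proof'' to compare against. Your sketch is essentially the standard argument in Villani: reduce the quadratic cost to maximizing the bilinear pairing, invoke the Fenchel--Young inequality $\langle x,y\rangle\le\phi(x)+\phi^*(y)$ with equality on the subdifferential, use a.e.\ differentiability of convex functions together with absolute continuity of $\mu$ for uniqueness of the coupling, and obtain the geodesic by displacement interpolation. That is the right route and matches what the citation points to.

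One point deserves tightening. For uniqueness of the geodesic you invoke the non-branching property, but non-branching (two geodesics sharing a common subinterval coincide) does not by itself rule out two geodesics between the same endpoints that share only their endpoints. The clean argument is the structural fact that \emph{every} Wasserstein geodesic in $\PP_2$ is of the form $\bigl((1-t)\,\mathrm{pr}_1+t\,\mathrm{pr}_2\bigr)_\sharp\pi$ for some optimal coupling $\pi$ (this is part of \cite[Theorem~2.16]{Vi1}); since you have already shown the optimal coupling is unique, uniqueness of the geodesic follows immediately. You also correctly flagged the integrability of $\phi$ and $\phi^*$ as the genuine technical obstacle; Villani resolves it via $c$-cyclical monotonicity rather than truncation, which is the cleaner path.
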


\subsection{$\varphi$-exponential function}
We summarize definitions and properties of the $\varphi$-exponential function.
For further details, we refer to~\cite{N} and references therein.

For an increasing, positive, continuous function $\varphi$ on $(0,\infty)$, 
the {\it $\varphi$-logarithmic function} is defined by 
\[
 \ln_{\varphi}(t):=\int_1^t\frac1{\varphi(s)}ds, 
\]
which is increasing, concave and $C^1$ on $(0,\infty)$. 
The constants $l_{\varphi}$ and $L_{\varphi}$ are respectively defined as the infimum and the supremum of $\ln_ {\varphi}$, that is, 
\begin{equation*}
 l_{\varphi} :=\inf_{t>0} \ln_{\varphi}(t) =\lim_{t\downarrow 0} \ln_{\varphi}(t) \in [-\infty,0),\qquad
 L_{\varphi} :=\sup_{t>0} \ln_{\varphi}(t) = \lim_{t\uparrow \infty} \ln_{\varphi}(t) \in(0,+\infty].
\end{equation*}
The function $\ln_{\varphi}$ has the inverse function, which is called the {\it $\varphi$-exponential function} and is defined on $(l_{\varphi},L_\varphi)$. 
This inverse function can be extended to all of $\R$ as 
\begin{equation*}
\exp_{\varphi}(\tau):=
\begin{cases}
0      
&\text{for\ }\tau  \leq l_{\varphi},\\
\ln_{\varphi}^{-1}(\tau)     
&  \text{for\ } \tau \in \left(l_{\varphi},L_{\varphi} \right),\\
\infty 
&  \text{for\ } \tau \geq L_\varphi,  
\end{cases}
\end{equation*}
which is $C^1$ on $(l_{\varphi},L_{\varphi})$ and its derivative is given by 
\[
  \frac{d}{d\tau} \exp_{\varphi}(\tau)=\varphi(\exp_{\varphi} (\tau)).
\]
We mention that if $\varphi(0)=0$ then the function $\exp_{\varphi}$ is always $C^1$ on $(-\infty,L_\varphi)$.
 
The case of $\varphi(s)=s$ is the most fundamental case related to  the Boltzmann--Gibbs statistics and the Fokker--Planck equation, 
where the $\varphi$-exponential function coincides with the usual exponential function. 
Another important case is that $\varphi(s)=s^{q}$, where the $\varphi$-exponential function is the power function given by  
\[
  \exp_q (\tau):= 
 [ 1+(1-q) \tau]_+^{\frac{1}{1-q}}, 
\]
where we set $[\tau]_+:=\max\{\tau,0\}$ for $\tau\in\R$ and  $0^{a}:=\infty$ for $a<0$. 
This case is related to the Tsallis statistics and  the nonlinear evolution equation of porous medium type.  

We introduce a class of increasing, positive, continuous functions on $(0,\infty)$.
\begin{definition}
For any $a\in\R$, we define $\Ord(a)$ to be the set of all  increasing, positive, continuous functions $\varphi$ on $(0,\infty)$ such that  
$\max\{\delta_\varphi, \delta^\varphi\}<a$, where we set
\begin{gather*}
\delta_\varphi
:=\inf 
    \left\{\delta \in \R \Bigm| \lim_{s \downarrow 0}
               \frac{s^{1+\delta}}{\varphi(s)}  \text{\ exists} 
    \right\},\qquad 
\delta^\varphi
:=\inf 
    \left\{\delta \in \R \Bigm|
              \lim_{s \uparrow \infty} \frac{s^{1+\delta}}{\varphi(s)} =\infty
   \right\}.
\end{gather*}
\end{definition}
It is trivial that $\Ord(a) \subset \Ord(b)$ if $a<b$.
We define the constant $\delta'_\varphi$, expressing the order of $\ln_\varphi$ at $0$, by 
\[
\delta'_{\varphi}
:= \inf 
    \left\{  \delta \in \R  \Bigm|
                \lim_{t \downarrow 0} t^{\delta} \ln_{\varphi} (t) \text{\ exists}
    \right\}. 
\]
We refer to the relation between $\delta_\varphi$ and $\delta'_\varphi$.
\begin{lemma}\label{lem:case}
For any $\varphi \in \Ord(a)$ with some $a\in \R$, we have  $\delta'_{\varphi} \leq \max\{\delta_\varphi, 0\}$.
\end{lemma}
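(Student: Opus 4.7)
The plan is to control $|\ln_{\varphi}(t)|$ from above as $t \downarrow 0$ via the decay rate of $1/\varphi$ at the origin (which is governed by $\delta_\varphi$), and then exhibit exponents $\delta'$ for which $t^{\delta'} \ln_{\varphi}(t)$ has a finite limit at $0$.

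Set $\delta^{*}:=\max\{\delta_\varphi,0\}$ and fix an arbitrary $\delta>\delta^{*}$, so that $\delta>0$ and $\delta>\delta_\varphi$. The first step is to show that $\lim_{s\downarrow 0} s^{1+\delta}/\varphi(s)$ exists as a finite number. By the infimum definition of $\delta_\varphi$ there is some $\delta_0 \in (\delta_\varphi,\delta)$ for which $L_0:=\lim_{s\downarrow 0} s^{1+\delta_0}/\varphi(s)$ exists; factoring
$$
\frac{s^{1+\delta}}{\varphi(s)}=s^{\delta-\delta_0}\cdot\frac{s^{1+\delta_0}}{\varphi(s)}
$$
and using $\delta-\delta_0>0$ shows that the ratio tends to $0\cdot L_0=0$ as $s\downarrow 0$. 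Combined with the positivity and continuity of $\varphi$ on the compact set $(0,1]$ (together with the finite limit at $0$), this yields a constant $M>0$ with $1/\varphi(s)\le M\,s^{-(1+\delta)}$ for every $s\in(0,1]$.

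Integrating from $t\in(0,1)$ to $1$ and using $\delta>0$ gives
$$
-\ln_{\varphi}(t)=\int_{t}^{1}\frac{ds}{\varphi(s)}\le M\int_{t}^{1}s^{-(1+\delta)}\,ds=\frac{M}{\delta}\bigl(t^{-\delta}-1\bigr),
$$
hence $t^{\delta}|\ln_{\varphi}(t)|\le M/\delta$ on $(0,1)$. For any $\delta'>\delta$ we then have
$$
t^{\delta'}\ln_{\varphi}(t)=t^{\delta'-\delta}\cdot t^{\delta}\ln_{\varphi}(t)\longrightarrow 0\quad\text{as }t\downarrow 0,
$$
so the limit defining membership in the set for $\delta'_\varphi$ exists, giving $\delta'_\varphi\le\delta'$. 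Letting $\delta'\downarrow\delta$ and then $\delta\downarrow\delta^{*}$ yields $\delta'_\varphi\le\delta^{*}=\max\{\delta_\varphi,0\}$.

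The only delicate step is the first: ensuring that the limit appearing in the definition of $\delta_\varphi$ is actually a \emph{finite} limit for every $\delta$ slightly above $\delta_\varphi$ (so as to obtain the pointwise bound on $1/\varphi$), which rests on the multiplicative ``upward closedness'' identity above. The hypothesis $\varphi\in\Ord(a)$ enters only through $\delta_\varphi<\infty$, guaranteeing that some such $\delta$ is available; the behavior at infinity encoded by $\delta^{\varphi}$ is not needed.
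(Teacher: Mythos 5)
Your proof is correct and follows essentially the same route as the paper: use the definition of $\delta_\varphi$ to bound $1/\varphi(s)$ by a multiple of $s^{-(1+\delta)}$ near the origin for any $\delta>\max\{\delta_\varphi,0\}$, integrate to get $t^{\delta}|\ln_\varphi(t)|$ bounded, and conclude by letting the exponent decrease to $\max\{\delta_\varphi,0\}$. You are in fact slightly more careful than the paper on two points it glosses over (justifying the boundedness of $s^{1+\delta}/\varphi(s)$ via the factoring identity, and passing to $\delta'>\delta$ so that the limit in the definition of $\delta'_\varphi$ genuinely exists rather than merely being bounded); the only blemish is calling $(0,1]$ compact, which is harmless since the ratio extends continuously to $[0,1]$.
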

\begin{proof}
Given any  $\varepsilon>0$, there exists $C>0$ such that 
\[
   0 \leq \frac{s^{1+\max\{\delta_\varphi, 0\}+\varepsilon }}{\varphi(s)} \leq C
\] 
holds for any $s \in (0,1)$. 
Set $\delta :=\max\{\delta_\varphi, 0\} + 2 \varepsilon$. 
Then for $t \in(0,1)$, we have 
\begin{align*}
0 
\leq  - t^{\delta} \ln_{\varphi} (t)
= t^{\delta} \int_t^1 \frac1 {\varphi(s)}ds 
\leq \int_t ^1 \frac{s^{\delta}}{\varphi(s)}ds 
\leq  C\int_t ^1 s^{-1+\varepsilon}ds  
= \frac{C}{\varepsilon} ( 1-t^{\varepsilon})  
<  \frac{C}{\varepsilon} < +\infty,  
\end{align*}
proving  $\delta'_\varphi \leq \max\{\delta_\varphi, 0\} + 2 \varepsilon$. 
Since $\varepsilon >0$ is arbitrary, we have $\delta'_\varphi \leq \max\{\delta_\varphi, 0\}$.
$\qedd$
\end{proof}

\section{Condition of $\varphi$}
This section is devoted to the study of  $\varphi$-exponential distributions with mean and covariance matrix parameters.
We first observe behaviors of the following function $f_\varphi$.
\begin{lemma}\label{lem:mass} 
For  any $\varphi \in \Ord(1)$, 
we set the function and the constant as  
\begin{align*}
& f_\varphi(p,\lambda)
:=\int_0^{\exp_\varphi(\lambda)} \left| (\lambda-\ln_\varphi (t))^{p} \frac{t}{\varphi(t)} \right|dt 
=\int_0^{\exp_\varphi(\lambda)} (\lambda-\ln_\varphi (t))^{p} \frac{t}{\varphi(t)}dt, \\
& p_\varphi:=\left\{
\begin{array}{cl}
\displaystyle \frac1{\max\{\delta_\varphi,\delta^{\varphi}\}}-1
&\text{if } \max\{\delta_\varphi,\delta^{\varphi}\} >0, \\
\infty
&\text{otherwise.} 
\end{array}\right.
\end{align*}
\begin{enumerate}[{\rm (1)}]
\item\label{i}
For any $(p,\lambda) \in (-1,p_\varphi) \times (l_\varphi,L_\varphi)$,  $f_\varphi(p,\lambda)$ is well-defined.
\item\label{ii}
The function $\lambda \mapsto f_\varphi(p,\lambda)$ on $(l_\varphi,L_\varphi)$ is $C^0$  for $p\in (-1, 0)$ and  $C^1$ for $p\in [0,p_\varphi )$.
\item\label{iii}
The function 
$p \mapsto f_\varphi(p,\lambda)$ on $(-1,p_\varphi )$ is $C^\infty$  for  $\lambda \in (l_\varphi,L_\varphi)$.
\item\label{iiii}
For any $p \in (-1,p_\varphi )$, $\lim_{\lambda \downarrow l_\varphi } f_\varphi(p,\lambda)=0$ and $\lim_{\lambda \uparrow L_\varphi} f_\varphi(p,\lambda)=\infty$ hold.
\end{enumerate}
\end{lemma}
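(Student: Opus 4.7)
The plan is to reduce $f_\varphi$ to a cleaner form via the change of variables $u = \ln_\varphi(t)$, under which $du = dt/\varphi(t)$ and $t = \exp_\varphi(u)$, so that
\[
f_\varphi(p, \lambda) = \int_{l_\varphi}^\lambda (\lambda - u)^p \exp_\varphi(u) \, du = \int_0^{\lambda - l_\varphi} v^p \exp_\varphi(\lambda - v) \, dv
\]
(the second equality via $v = \lambda - u$). This rewriting isolates the two sources of potential singularity: the factor $v^p$ at $v = 0$, and the decay of $\exp_\varphi$ toward $l_\varphi$.

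For part (\ref{i}), I check integrability at both endpoints. Near $v = 0$, the factor $v^p$ is integrable precisely when $p > -1$. Near $v = \lambda - l_\varphi$, the case $\delta_\varphi \leq 0$ is trivial since then $l_\varphi$ is finite and the integrand stays bounded there; when $\delta_\varphi > 0$, the definition of $\delta_\varphi$ provides, for each $\delta > \delta_\varphi$, a constant $c>0$ with $1/\varphi(s) \leq c s^{-1-\delta}$ near $s = 0$, whence $\exp_\varphi(u) = O(|u|^{-1/\delta})$ as $u \to -\infty$. Then $v^p \exp_\varphi(\lambda - v)$ is integrable at $v = \infty$ whenever $p < 1/\delta - 1$, and letting $\delta \downarrow \delta_\varphi$ yields the stated range $p < p_\varphi$.

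Parts (\ref{ii}) and (\ref{iii}) follow from standard dominated-convergence and differentiation-under-the-integral arguments using the integrable majorant supplied by part (\ref{i}) on a neighborhood of each $(p, \lambda)$. Continuity in $\lambda$ holds on the whole range. For $p \in [0, p_\varphi)$, applying Leibniz's rule to the original integral shows the boundary contribution at $t = \exp_\varphi(\lambda)$ vanishes (since $(\lambda - \ln_\varphi(\exp_\varphi(\lambda)))^p = 0$), producing
\[
\partial_\lambda f_\varphi(p, \lambda) = p \, f_\varphi(p-1, \lambda),
\]
which is well-defined because $p - 1 > -1$. For smoothness in $p$, each derivative brings in a factor $\log(\lambda - \ln_\varphi(t))$; since logarithms are sub-polynomial, the integrals remain convergent for $p$ on any compact subset of $(-1, p_\varphi)$, justifying differentiation under the integral.

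The main obstacle is the divergence in part (\ref{iiii}) as $\lambda \uparrow L_\varphi$, which must be handled separately depending on whether $L_\varphi$ is finite. When $L_\varphi = \infty$, the bound
\[
f_\varphi(p, \lambda) \geq \exp_\varphi(\lambda - 1) \int_0^1 v^p \, dv
\]
diverges. When $L_\varphi < \infty$, use the shrinking window $(0, L_\varphi - \lambda)$ to obtain
\[
f_\varphi(p, \lambda) \geq \exp_\varphi(2\lambda - L_\varphi) \cdot \frac{(L_\varphi - \lambda)^{p+1}}{p+1},
\]
and invoke the blow-up of $\exp_\varphi$ at $L_\varphi$ (polynomial of order $1/\delta^\varphi$ when $\delta^\varphi > 0$, faster than any polynomial when $\delta^\varphi = 0$) together with $p < p_\varphi$ to conclude divergence. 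The limit $\lambda \downarrow l_\varphi$ is immediate by dominated convergence since the domain of integration collapses.
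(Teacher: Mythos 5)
Your substitution $u=\ln_\varphi(t)$, which rewrites $f_\varphi(p,\lambda)$ as $\int_0^{\lambda-l_\varphi} v^p\exp_\varphi(\lambda-v)\,dv$, is a genuinely different and in places cleaner route than the paper's: the paper performs this substitution only partially (for $p\in(-1,0)$, on the piece near $t=\exp_\varphi(\lambda)$), handles $p\ge0$ by inserting the weight $t^{-\delta(p+1)}$ and bounding the remaining factor, and proves the divergence in (4) by integrating by parts and using the concavity of $\ln_\varphi$ to get the lower bound $\exp_\varphi(\lambda)\bigl(\exp_\varphi(\lambda)/\varphi(\exp_\varphi(\lambda))\bigr)^{p+1}/((p+1)(p+2))$, which covers $L_\varphi$ finite and infinite in one stroke. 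Your version instead splits on whether $L_\varphi$ is finite and uses explicit asymptotics of $\exp_\varphi$ near $l_\varphi$ and $L_\varphi$ extracted from $\delta_\varphi$ and $\delta^\varphi$; both arguments are valid, and yours makes the role of the thresholds $1/\delta-1$ more transparent.

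Two boundary cases are wrong as stated, though both are repairable. First, in (1) you claim that $\delta_\varphi\le0$ forces $l_\varphi>-\infty$; this fails at $\delta_\varphi=0$ (for $\varphi(s)=s$ one has $\delta_\varphi=0$ but $l_\varphi=-\infty$). The case split should be on whether $l_\varphi$ is finite; when it is not (which forces $\delta_\varphi\ge0$), your tail estimate $\exp_\varphi(u)=O(|u|^{-1/\delta})$ applies verbatim for any $\delta>\max\{\delta_\varphi,0\}$, so nothing else changes. Second, in (2) the boundary term in Leibniz's rule does \emph{not} vanish at $p=0$: there $(\lambda-\ln_\varphi(\exp_\varphi(\lambda)))^0=1$ and the boundary contribution is the entire derivative, namely $\partial_\lambda f_\varphi(0,\lambda)=\exp_\varphi(\lambda)$, whereas your formula $p\,f_\varphi(p-1,\lambda)$ degenerates to $0\cdot f_\varphi(-1,\lambda)$ with $f_\varphi(-1,\lambda)$ divergent. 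The $C^1$ conclusion of (2) survives, but the correct value at $p=0$ matters downstream: the paper uses $\partial_\lambda f_\varphi(0,\lambda)=\exp_\varphi(\lambda)$ in the l'H\^opital step of Proposition~\ref{kesu}. (Similarly, ``the domain of integration collapses'' as $\lambda\downarrow l_\varphi$ only applies when $l_\varphi$ is finite; dominated convergence alone gives that limit in general.)
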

\begin{proof}
Note that for any $p\in (-1,p_\varphi)$, we have $1/(p+1) > \max\{\delta_\varphi, \delta^{\varphi}, \delta'_\varphi\}$. 

\eqref{i}: Fix any $\lambda \in (l_\varphi,L_\varphi)$ and  take $\varepsilon \in (0, \lambda-l_\varphi)$.
Given any $p\in(-1,0)$, we deform 
\begin{align*}
 f_\varphi(p,\lambda)
&=\int_0^{\exp_\varphi(\lambda-\varepsilon)} (\lambda-\ln_\varphi (t))^{p} \frac{t}{\varphi(t)} dt 
  +\int_{\exp_\varphi(\lambda-\varepsilon)}^{\exp_\varphi(\lambda)}  (\lambda-\ln_\varphi (t))^{p} \frac{t}{\varphi(t)}  dt\\
&=\int_0^{\exp_\varphi(\lambda-\varepsilon)} \left(\lambda-\ln_\varphi (t)\right)^{p} \frac{t}{\varphi(t)}dt
+\int_0^{\varepsilon} s^{p} \exp_\varphi(\lambda-s)ds \\
&\leq  \varepsilon^p \int_0^{\exp_\varphi(\lambda-\varepsilon)}  \frac{t}{\varphi(t)}dt +\exp_\varphi(\lambda)\int_0^{\varepsilon} s^{p} ds,
\end{align*}
where we use the change of variables formula for $s=\lambda-\ln_\varphi(t)$ in the second line, and 
the monotonicity of $\ln_\varphi$ and $\exp_\varphi$ in the inequality.
The integrability of the last line follows from the conditions that $\varphi \in \Ord(1)$ and $p > -1$.

For any $p\in[0,p_\varphi)$,  we fix $\delta$ as 
\[
 \max\{\delta_\varphi, \delta^{\varphi}, \delta'_\varphi\} < \delta:=\frac12\left(  \max\{\delta_\varphi, \delta^{\varphi}, \delta'_\varphi\}+\frac1{p+1} \right) <\frac1{p+1}
\]
and deform $f_\varphi(p,\lambda)$ as 
\[
   f_\varphi(p,\lambda)
 =\int_0^{\exp_\varphi(\lambda)} \left( t^{\delta}  \lambda-t^{\delta} \ln_{\varphi} (t)\right)^{p} 
  \frac{ t^{1+\delta}}{\varphi(t)} 
 \cdot t^{-\delta ( p+1)}dt.   
\]
Since  $\delta (p+1)<1$ holds and the function $ t\mapsto ( t^{\delta}  \lambda-t^{\delta} \ln_{\varphi} (t))^{p}   t^{1+\delta}/\varphi(t)$ is bounded on $(0,\exp_\varphi (\lambda))$, 
$f_\varphi(p,\lambda)$ is well-defined.

\eqref{ii}, \eqref{iii}: The assertions follow from Lebesgue's dominated convergence theorem.

\eqref{iiii}: The assertion $\lim_{\lambda \downarrow l_\varphi } f_\varphi(p,\lambda)=0$ also follows from Lebesgue's dominated convergence theorem.
Integrating by part with the condition $1/(p+1)>\delta'_\varphi$ yields  that  
\begin{align*}
f_\varphi(p,\lambda)
&=\frac{-t}{p+1} \left(\lambda-\ln_\varphi(t)\right)^{p+1} \bigg|_0^{\exp_\varphi(\lambda)} 
+\frac{1}{p+1}\int_0^{\exp_\varphi(\lambda)}(\lambda-\ln_\varphi(t))^{p+1} dt \\
&\geq 0+\frac{1}{p+1}\int_0^{\exp_\varphi(\lambda)} \left( \frac{\exp_\varphi(\lambda)-t}{\varphi(\exp_\varphi(\lambda))} \right)^{p+1} dt \\
&=\frac{\exp_\varphi(\lambda)}{(p+1)(p+2)} \left( \frac{\exp_\varphi(\lambda)}{\varphi(\exp_\varphi(\lambda))} \right)^{p+1}, 
\end{align*}
where the inequality follows from the concavity of $\ln_\varphi$, that is,     
\[
\ln_\varphi(t) 
\leq \ln_\varphi(\exp_\varphi(\lambda))+ \frac{t-\exp_\varphi(\lambda)}{\varphi(\exp_\varphi(\lambda))}   
=\lambda+ \frac{t-\exp_\varphi(\lambda)}{\varphi(\exp_\varphi(\lambda))}.
\]
The condition $1/(p+1) > \delta^\varphi$ leads that
\[
 \lim_{\lambda \uparrow L_\varphi} \exp_\varphi(\lambda) \left( \frac{\exp_\varphi(\lambda)}{\varphi(\exp_\varphi(\lambda))} \right)^{p+1} 
=\lim_{s \uparrow \infty}  \left( \frac{s^{1+\frac1{p+1}}}{\varphi(s)} \right)^{p+1}
=\infty, 
\]
proving $\lim_{\lambda \uparrow L_\varphi} f_\varphi(p,\lambda)=\infty$. 
$\qedd$
\end{proof}

Let $\Sym(d,\R)_+$ be the set of symmetric positive definite matrices of size $d$, and  $\{e_i\}_{i=1}^d$ be the standard basis on $\R^d$.
Given any $V \in \Sym(d,\R)_+$,  
let $V^{1/2}$ be the symmetric positive definite matrix  such that  $V^{1/2}\cdot  V^{1/2} = V$ and  $|x|_V^2:=\lr{x}{V^{-1}x}$ for $x\in\R^d$.
\begin{proposition}\label{kesu}
For any $\varphi \in \Ord(2/(d+2))$ with $d\geq2$, 
there exist constants $\lambda \in (l_\varphi,L_\varphi)$ and $c>0$ such that 
\begin{align}
\label{mass}
&\int_{\R^d} \exp_\varphi (\lambda-c|x-v|_V^2) d\LL^d =1, \\
\label{mean}
&\int_{\R^d} \lr{x}{e_i} \exp_\varphi (\lambda-c|x-v|_V^2) d\LL^d= \lr{v}{e_j}, \\
\label{vari}
&\int_{\R^d} \lr{x-v}{e_i}\lr{x-v}{e_j} \exp_\varphi (\lambda-c|x-v|_V^2) d\LL^d= \lr{e_i}{Ve_j}.
\end{align}
\end{proposition}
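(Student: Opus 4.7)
The plan is to reduce everything, via the substitution $y = \sqrt{c}\, V^{-1/2}(x-v)$ (which has Jacobian $\sqrt{\det V}/c^{d/2}$ and converts $c|x-v|_V^2$ into $|y|^2$), to one-dimensional integrals expressible in terms of the function $f_\varphi$ of Lemma~\ref{lem:mass}. Because $\exp_\varphi(\lambda - c|x-v|_V^2)$ is symmetric about $v$ and, after the substitution, rotationally symmetric in $y$, equation~\eqref{mean} follows automatically from \eqref{mass} and the off-diagonal entries of~\eqref{vari} vanish. The identity $(V^{1/2})^2 = V$ then collapses the diagonal entries of~\eqref{vari} to a single scalar condition, so the whole system is equivalent to
\[
\frac{\sqrt{\det V}}{c^{d/2}}A(\lambda) = 1, \qquad \frac{\sqrt{\det V}}{d\,c^{d/2+1}} B(\lambda) = 1,
\]
where $A(\lambda) := \int_{\R^d} \exp_\varphi(\lambda - |y|^2)\,d\LL^d$ and $B(\lambda) := \int_{\R^d} |y|^2 \exp_\varphi(\lambda - |y|^2)\,d\LL^d$.

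Next, polar coordinates followed by the substitution $u = r^2$ identify $A(\lambda) = (\sigma_{d-1}/2)\,f_\varphi((d-2)/2, \lambda)$ and $B(\lambda) = (\sigma_{d-1}/2)\,f_\varphi(d/2, \lambda)$, where $\sigma_{d-1}$ denotes the surface area of the unit sphere in $\R^d$. The hypothesis $\varphi \in \Ord(2/(d+2))$ is exactly the inequality $p_\varphi > d/2$, so Lemma~\ref{lem:mass}(\ref{i})--(\ref{ii}) makes $A$ and $B$ finite, positive, and continuous on $(l_\varphi, L_\varphi)$ (continuity needs $(d-2)/2 \geq 0$, which holds since $d\geq 2$). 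Dividing the two scalar conditions yields $c = B(\lambda)/(d A(\lambda))$ and reduces the problem to finding $\lambda \in (l_\varphi, L_\varphi)$ with
\[
F(\lambda) := \frac{B(\lambda)^{d/2}}{A(\lambda)^{d/2+1}} = d^{d/2}\sqrt{\det V}.
\]
Since $F$ is continuous and $\sqrt{\det V}>0$ is arbitrary, by the intermediate value theorem it suffices to verify $\lim_{\lambda \downarrow l_\varphi} F(\lambda) = \infty$ and $\lim_{\lambda \uparrow L_\varphi} F(\lambda) = 0$.

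The main obstacle is establishing these two limits, since Lemma~\ref{lem:mass}(\ref{iiii}) only records the individual behavior of $A$ and $B$ and not their relative rates. My plan is to refine the concavity bound $\lambda - \ln_\varphi(t) \geq (\exp_\varphi(\lambda) - t)/\varphi(\exp_\varphi(\lambda))$ used in the proof of Lemma~\ref{lem:mass}(\ref{iiii}) into a two-sided asymptotic of the form $f_\varphi(p, \lambda) \asymp \exp_\varphi(\lambda)^{p+2}/\varphi(\exp_\varphi(\lambda))^{p+1}$ near both endpoints, complementing the concavity lower bound by an upper bound coming from $\ln_\varphi(t) \geq l_\varphi$ when $l_\varphi$ is finite and by an estimate tailored to the growth of $\varphi$ when $l_\varphi = -\infty$. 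The exponents in $F$ are arranged so that the $\varphi(\exp_\varphi(\lambda))$-factors cancel between numerator and denominator and only a factor of order $1/\exp_\varphi(\lambda)$ survives, which produces the required limits and completes the existence of $\lambda$ (and hence of $c = B(\lambda)/(dA(\lambda)) > 0$).
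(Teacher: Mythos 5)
Your reduction is sound and is essentially identical to the paper's: the change of variables $y=\sqrt{c}\,V^{-1/2}(x-v)$, polar coordinates and $u=r^{2}$ turn the three conditions into the scalar system $c=f_\varphi(d/2,\lambda)/(d\,f_\varphi((d-2)/2,\lambda))$ together with one equation for $\lambda$, and the intermediate value theorem finishes once the two endpoint limits of the ratio are known. The symmetry observations disposing of \eqref{mean} and of the off-diagonal part of \eqref{vari} are fine, as is the identification of $A$ and $B$ with $f_\varphi((d-2)/2,\lambda)$ and $f_\varphi(d/2,\lambda)$ and the verification that $\varphi\in\Ord(2/(d+2))$ gives $p_\varphi>d/2$.

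The gap is in the only hard step, namely the endpoint limits of $F(\lambda)=B(\lambda)^{d/2}/A(\lambda)^{d/2+1}$: you do not prove them, you announce a plan to establish the two-sided asymptotic $f_\varphi(p,\lambda)\asymp\exp_\varphi(\lambda)^{p+2}/\varphi(\exp_\varphi(\lambda))^{p+1}$. The lower bound is indeed the concavity estimate already used in the proof of Lemma~\ref{lem:mass}\eqref{iiii}, but the matching upper bound is false for general $\varphi\in\Ord(2/(d+2))$. Membership in $\Ord$ only constrains $\varphi$ through one-sided power comparisons at $0$ and $\infty$ and imposes no doubling-type regularity, so one can build an increasing continuous $\varphi$ with $\varphi(s)=o(s^{1+\delta})$ for which $\varphi(E)/\varphi(3E/4)$ is unbounded along a sequence $E=E_k\to\infty$ (let $\varphi$ rise in ever steeper steps). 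For such $\varphi$, restricting the integral to $[E/2,3E/4]$ and using $\lambda-\ln_\varphi(t)\ge (E-t)/\varphi(E)$ gives $f_\varphi(p,\lambda)\ge C_p\,E^{p+2}/\bigl(\varphi(E)^{p}\varphi(3E/4)\bigr)$, which is much larger than $E^{p+2}/\varphi(E)^{p+1}$ along that sequence; hence the cancellation of the $\varphi(\exp_\varphi(\lambda))$ factors in $F$ on which your argument rests breaks down. The heuristics you offer for the upper bound ($\ln_\varphi(t)\ge l_\varphi$ when $l_\varphi$ is finite, or an estimate ``tailored to the growth of $\varphi$'') would additionally require $\int_0^{E}t/\varphi(t)\,dt\le C E^{2}/\varphi(E)$ and $\lambda-l_\varphi\le C E/\varphi(E)$, both again power-law-type conditions not implied by $\varphi\in\Ord$. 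The paper circumvents asymptotics of $f_\varphi$ entirely: Cauchy--Schwarz shows $\ln f_\varphi$ is convex in $p$, hence the ratio $F_\varphi(p,\lambda)=f_\varphi(p-1,\lambda)^{p+1}/f_\varphi(p,\lambda)^{p}$ is monotone in $p$; the identity $\partial_\lambda f_\varphi(p,\lambda)=p f_\varphi(p-1,\lambda)$ together with de l'H\^opital's rule gives the limits for integer $p$ by induction starting from $F_\varphi(1,\lambda)\to 2\exp_\varphi(\lambda)$, and monotonicity in $p$ transfers them to $p=d/2$. You need either this mechanism or a genuinely different proof of the two limits to close the argument.
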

\begin{proof}
Note that the condition $\varphi \in \Ord(2/(d+2))$ with $d \geq 2$ implies that $1 \leq d/2 < p_\varphi $.
Let $\Gamma(\cdot)$ be the Gamma function.
Applying the change of variables formula first for $y=c^{1/2}V^{-1/2}(x-v)$ then from Cartesian to polar coordinates 
and for $t=\exp_{\varphi}(\lambda-|y|^2)$, we have   
\begin{align*}
&\int_{\R^d} \exp_\varphi (\lambda-c|x-v|_V^2) d\LL^d 
=f_\varphi\left( \frac{d-2}{2}, \lambda\right) \cdot\left( \frac{\pi}{c} \right)^{\frac{d}2}  \frac{\sqrt{ \det V }}{\Gamma\left( d/2 \right)}, \\
&\int_{\R^d} \lr{x}{e_i} \exp_\varphi (\lambda-c|x-v|_V^2) d\LL^d 
=f_\varphi\left( \frac{d-2}{2}, \lambda\right) \cdot\left( \frac{\pi}{c} \right)^{\frac{d}2}  \frac{\sqrt{ \det V }}{\Gamma\left( d/2 \right)}\cdot\lr{v}{e_i}, \\
&\int_{\R^d} \lr{x-v}{e_i}\lr{x-v}{e_j} \exp_\varphi (\lambda-c|x-v|_V^2) d\LL^d 
=f_\varphi\left(\frac{d}2, \lambda\right) \cdot \left( \frac{\pi}{c} \right)^{\frac{d}2}  \frac{\sqrt{ \det V }}{ \Gamma\left( d/2 \right)} \cdot \frac{\lr{e_i}{Ve_j}}{cd},
\end{align*}
which are well-defined by Lemma~\ref{lem:mass}. 
This means that the pair $(\lambda,c)$ satisfying 
\begin{gather}\label{const}
\frac{f_\varphi\left((d-2)/2,\lambda\right)^{\frac{d+2}2}} { f_\varphi\left(d/2,\lambda\right)^{\frac{d}2} }
=(d\pi)^{-\frac{d}2}  \frac{\Gamma\left(d/2 \right)} {\sqrt{\det  V}}, \qquad 
c=\frac{f_\varphi\left(d/2,\lambda\right)}{df_\varphi\left((d-2)/2,\lambda\right)}
\end{gather}
is the desired one.
If the continuous function $F_\varphi(p,\lambda):=f_\varphi(p-1,\lambda) ^{p+1}/f_\varphi(p,\lambda)^p$ on $(0,p_\varphi) \times (l_\varphi,L_\varphi)$ 
satisfies 
\begin{equation}\label{limit}
\lim_{\lambda \downarrow l_\varphi } F_\varphi\left(\frac{d}2,\lambda\right)=0,\qquad
\lim_{\lambda \uparrow L_\varphi} F_\varphi\left(\frac{d}2,\lambda\right)=\infty,
\end{equation}
then the intermediate value theorem guarantees the existence of such a pair $(\lambda,c)$.

The rest is to prove~\eqref{limit}.
For any $(p,\lambda) \in (-1, p_\varphi) \times (l_\varphi,L_\varphi)$, H\"older's inequality yields that
\begin{align*}
\frac{\partial}{\partial p}f_\varphi (p,\lambda) 
&=\int_0^{\exp_\varphi(\lambda)} \ln (\lambda-\ln_\varphi(t)) \frac{ (\lambda-\ln_\varphi(t))^p t}{\varphi(t)} dt  \\
&\leq \left( \int_0^{\exp_\varphi(\lambda)} \ln^2 (\lambda-\ln_\varphi(t)) \frac{(\lambda-\ln_\varphi(t))^p t}{\varphi(t)} dt   \right)^{\frac12}
     \left( \int_0^{\exp_\varphi(\lambda)}  \frac{(\lambda-\ln_\varphi(t))^p t}{\varphi(t)} dt   \right)^{\frac12} \\
&=\left(\frac{\partial^2}{\partial p^2}f_\varphi (p,\lambda) \right)^{\frac12} f_\varphi(p,\lambda)^{\frac12}, 
\end{align*}
providing the convexity of $\ln f_\varphi$ in $p$.
Then for any $(p,\lambda) \in (0, p_\varphi) \times (l_\varphi,L_\varphi)$, we have 
\begin{align*}
\frac{\partial}{\partial p}\ln F_\varphi(p,\lambda)
&= (p+1) \frac{\partial}{\partial p}\ln f_\varphi(p-1,\lambda)+ \ln f_\varphi(p-1,\lambda) - p \frac{\partial}{\partial p} \ln f_\varphi(p,\lambda) - \ln f_\varphi(p,\lambda) \\
&\leq  p \frac{\partial}{\partial p}\ln f_\varphi(p-1,\lambda) - p \frac{\partial}{\partial p} \ln f_\varphi(p,\lambda) 
\leq 0 
\end{align*}
and deduce the monotonicity of  $F_\varphi$ in $p$.
Since we have
\[
\frac{\partial}{\partial \lambda}f_\varphi(0,\lambda) 
= \exp_\varphi(\lambda), \qquad 
\frac{\partial}{\partial \lambda}f_\varphi(p,\lambda) 
=p\int_0^{\exp_\varphi(\lambda)} (\lambda-\ln_\varphi(t))^{p-1} \frac{t}{\varphi(t)} dt
=pf_\varphi(p-1,\lambda) 
\]
for $p\in(1,p_\varphi)$,  
de l'H\^opital's rule with Lemma~\ref{lem:mass}\eqref{iiii} yields that
\begin{align*}
&\lim_{\lambda \downarrow l_\varphi } F_\varphi\left(1,\lambda\right) = \lim_{\lambda \downarrow l_\varphi } 2\exp_\varphi(\lambda)=0, 
&&\lim_{\lambda \uparrow L_\varphi } F_\varphi\left(1,\lambda\right) = \lim_{\lambda \uparrow L_\varphi } 2\exp_\varphi(\lambda)=\infty,\\
&\lim_{\lambda \downarrow l_\varphi } F_\varphi\left(p,\lambda\right)^{\frac1p}=\frac{p^2-1}{p^2}\lim_{\lambda \downarrow l_\varphi } F_\varphi(p-1,\lambda)^{\frac{1}p}, 
&&\lim_{\lambda \uparrow L_\varphi } F_\varphi\left(p,\lambda\right)^{\frac1p}=\frac{p^2-1}{p^2}\lim_{\lambda \uparrow L_\varphi } F_\varphi(p-1,\lambda)^{\frac{1}p} 
\end{align*}
for any $p\in(1,p_\varphi)$, in particular, 
\begin{align*}
\lim_{\lambda \downarrow l_\varphi } F_\varphi\left(n,\lambda\right) = 
\lim_{\lambda \downarrow l_\varphi } F_\varphi\left(1,\lambda\right) =0, \qquad
\lim_{\lambda \uparrow L_\varphi } F_\varphi\left(n,\lambda\right) =
\lim_{\lambda \uparrow L_\varphi } F_\varphi\left(1,\lambda\right) = \infty
\end{align*}
for any integer $n \in[1, p_\varphi)$.
This with the monotonicity of $F_\varphi$ in $p$ finishes the proof. 
$\qedd$
\end{proof}

\begin{remark}\label{rem}
For the pair of constants $(\lambda,c)$  obtained for $V=I_d$ in~\eqref{const}, 
we find that $\exp_{\varphi}(\lambda-c |x-v|_V^2) (\det V)^{-1/2} \LL^d$ becomes the probability measure with mean $v$ and covariance matrix $V$ 
by the change of variables formula for $y=V^{-1/2}(x-v)$. 
\end{remark}

\section{Convexity in Wasserstein geometry}
In this section, we investigate the two spaces of $\varphi$-exponential distributions with mean and covariance matrix  parameters.
We mention that $\alpha \varphi(s)=s^{q} \in \Ord(2/(d+2))$ with some $\alpha>0$ if and only if $q \in Q_d:=(0,(d+4)/(d+2))$.

Proposition~\ref{kesu} and Remark~\ref{rem} yield that for any $\varphi \in \Ord(2/(d+2))$ with $d \geq 2$,
there exist continuous functions $\lambda_\varphi$ and $c_\varphi$ on $\Sym(d,\R)_+$ such that each of 
\begin{align*}
n_{\varphi}(v,V)(x)&:=\exp_{\varphi}\left(\lambda_\varphi(V)-c_\varphi(V) |x-v|_V^2\right), \\
g_{\varphi}(v,V)(x)&:=\exp_{\varphi}(\lambda_\varphi(I_d)-c_\varphi(I_d) |x-v|_V^2) (\det V)^{-\frac12}
\end{align*}
is the probability density on $(\R^d, \LL^d)$ with mean $v$ and covariance matrix $V$.
We define the two spaces of probability measures by
\begin{gather*}
\mathcal{N}_{\varphi}:=\{N_{\varphi}(v,V):=n_{\varphi}(v,V)\LL^d \bigm| (v,V) \in \R^d \times \Sym(d,\R)_+ \}, \\
\mathcal{G}_{\varphi}:=\{G_{\varphi}(v,V):=g_{\varphi}(v,V)\LL^d \bigm| (v,V) \in \R^d \times \Sym(d,\R)_+ \},
\end{gather*}
which generally differ from each other.

\begin{theorem}\label{icchi}
For any $\varphi, \psi \in\Ord(2/(d+2))$ with $d \geq 2$, 
$G_{\varphi} (0,a^{2}I_d)=N_{\psi}(0,a^{2}I_d) $ holds for any $a>0$ if and only if $\alpha\varphi(s)=\beta\psi(s)=s^{q}$ with some $\alpha,\beta >0$ and $q \in Q_d$.
\end{theorem}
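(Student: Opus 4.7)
The plan separates the two implications, with the only-if direction carrying the substantive work. For the if direction, when $\alpha\varphi(s)=s^q$ the explicit formula $\exp_\varphi(\tau)=(1+(1-q)\tau/\alpha)_+^{1/(1-q)}$ (with the Gaussian limit $e^{\tau/\alpha}$ at $q=1$) shows that $g_\varphi(0,a^2I_d)$ is a density of the $q$-Gaussian form $C(A-B|x|^2)_+^{1/(1-q)}$, and the same holds for $n_\psi(0,a^2I_d)$ when $\beta\psi(s)=s^q$. Both are probability densities on $\R^d$ with mean $0$, covariance $a^2I_d$, and exponent $1/(1-q)$; within this one-parameter power-law family the prescribed moments determine the measure uniquely, so the two coincide.

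For the only-if direction I unwind the equality of densities into a multiplicative functional equation. Writing $s:=|x|^2/a^2$, $\lambda_0:=\lambda_\varphi(I_d)$, $c_0:=c_\varphi(I_d)$, $\Lambda(a):=\lambda_\psi(a^2I_d)$, and $C(a):=c_\psi(a^2I_d)$, the hypothesis reads
\[
a^{-d}\exp_\varphi(\lambda_0-c_0 s)=\exp_\psi(\Lambda(a)-C(a)s)\qquad(a>0,\;s\geq 0).
\]
Differentiating in $s$ and applying $(d/d\tau)\exp_\xi=\xi\circ\exp_\xi$ to both sides yields
\[
\psi(a^{-d}u)=\gamma(a)\varphi(u),\qquad\gamma(a):=a^{-d}c_0/C(a),
\]
valid for every $a>0$ and $u\in(0,\exp_\varphi(\lambda_0)]$. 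Fixing two reference values $u_1,u_2$ in this interval and dividing the corresponding identities to eliminate $\gamma(a)$ gives $\psi(a^{-d}u_1)/\psi(a^{-d}u_2)=\varphi(u_1)/\varphi(u_2)$; writing $v:=a^{-d}u_2$ and $\rho:=u_1/u_2$ shows that $\psi(v\rho)/\psi(v)$ depends only on $\rho$ for every $v>0$. A standard Cauchy-type argument combined with continuity of $\psi$ then forces $\psi(v)=\psi(1)v^q$ on all of $(0,\infty)$ for some $q\in\R$, and substituting back identifies $\varphi$ as the same power function on its relevant domain. Finally, $\varphi\in\Ord(2/(d+2))$ applied to $\alpha\varphi(s)=s^q$ gives $\delta_\varphi=\delta^\varphi=q-1<2/(d+2)$, while monotonicity of $\varphi$ forces $q>0$, so $q\in Q_d$.

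The main obstacle is the domain asymmetry in the functional equation: $\psi$ is probed at arguments $a^{-d}u$ that range over all of $(0,\infty)$ as $a$ varies, but $\varphi$ only appears with arguments $u\in(0,\exp_\varphi(\lambda_0)]$. Extracting a global Cauchy equation for $\psi$ from this restricted data, and then transporting the resulting power-law rigidity back to $\varphi$, is the delicate step; the exponent computation yielding $q\in Q_d$ and the if direction are more routine.
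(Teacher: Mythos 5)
Your proposal is correct, and the core of the ``only if'' direction is the same as the paper's: differentiate the identity $a^{-d}\exp_\varphi(\lambda_0-c_0s)=\exp_\psi(\Lambda(a)-C(a)s)$ in $s$, use $(d/d\tau)\exp_\xi=\xi\circ\exp_\xi$ to obtain $\psi(a^{-d}u)=\gamma(a)\varphi(u)$, and reduce to Cauchy's functional equation. Two points differ. First, your route to multiplicativity is cleaner: by fixing the ratio $\rho=u_1/u_2$ and letting the base point $v=a^{-d}u_2$ sweep all of $(0,\infty)$, you get that $h(\rho):=\psi(v\rho)/\psi(v)$ is globally well-defined and multiplicative in one stroke, whereas the paper first obtains $\Psi(\xi\eta)=\Psi(\xi)\Psi(\eta)$ only for $\xi,\eta$ in a bounded range and must extend by induction. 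Second, for the ``if'' direction the paper uses the scaling relations $\ln_{\alpha\varphi}=\alpha^{-1}\ln_\varphi$, $f_{\alpha\varphi}(p,\lambda)=\alpha^{-(p+1)}f_\varphi(p,\alpha\lambda)$ to reduce to $\varphi=\psi=s^q$ and then cites~\cite{qT}, while you argue directly that both densities lie in the two-parameter family $(A-B|x|^2)_+^{1/(1-q)}$ and are pinned down by normalization and covariance; this is a valid self-contained substitute (the uniqueness is a one-line scaling computation), though you should spell it out. One caveat you correctly flag but do not resolve --- and the paper silently shares this --- is that the functional equation only determines $\varphi$ on $(0,\exp_\varphi(\lambda_\varphi(I_d)))$, since $G_\varphi(0,a^2I_d)$ never probes $\varphi$ above that threshold; the global statement $\alpha\varphi(s)=s^q$ for all $s>0$ is thus not literally forced by the hypothesis, and the theorem should be read modulo this. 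Your exponent bookkeeping ($\delta_\varphi=\delta^\varphi=q-1<2/(d+2)$ and $q>0$ from monotonicity, giving $q\in Q_d$) is correct.
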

\begin{proof}
We first prove the ``if" part.
For any $\varphi \in\Ord(2/(d+2))$ with $d \geq 2$ and  $\alpha>0$,  the following relations are verified;
\begin{gather*}
\ln_{\alpha \varphi}(t)=\alpha^{-1} \ln_{\varphi}(t), \qquad 
\exp_{\alpha \varphi}(\tau)=\exp_{\varphi}(\alpha \tau), \quad 
f_{\alpha \varphi}(p,\lambda)=\alpha^{-(p+1)}f_{\varphi}(p,\alpha\lambda). 
\end{gather*}
These relations with~\eqref{const} implies that, for any $V\in \Sym(d,\R)_+$,  we have 
\begin{gather*}
(d\pi)^{-\frac{d}2}  \frac{\Gamma\left(d/2 \right)} {\sqrt{\det  V}}=F_{\alpha \varphi}\left(\frac{d}2, \lambda_{\alpha \varphi}(V)\right)=F_{\varphi}\left(\frac{d}2, \alpha \lambda_{ \alpha \varphi}(V)\right),\\ 
c_{\alpha\varphi}(V)
=\frac{f_{\alpha\varphi}\left(d/2, \lambda_{\alpha\varphi} (V)\right)}{df_{\alpha\varphi} \left((d-2)/2, \lambda_{\alpha\varphi}(V)\right)}
=\alpha^{-1}\frac{f_{\varphi}\left(d/2, \alpha\lambda_{\alpha\varphi} (V)\right)}{df_{\varphi} \left((d-2)/2, \alpha\lambda_{\alpha\varphi}(V)\right)},
\end{gather*}
which shows  that $\alpha \lambda_{\alpha\varphi} (V) =\lambda_{\varphi}(V)$ and  $\alpha c_{\alpha \varphi}(V) =c_{\varphi}(V)$, 
in turn $G_{\alpha\varphi}(v,V)=G_{\varphi}(v,V)$ and $N_{\alpha\varphi}(v,V)=N_{\varphi}(v,V)$.
Therefore it is enough to prove the case $\varphi(s)=\psi(s)=s^q$ with $q\in Q_d$, which  has been already demonstrated in~\cite[Section~4]{qT}.

Let us  prove the ``only if" part. 
The condition $G_{\varphi} (0,a^{2}I_d)=N_{\psi}(0,a^{2}I_d)$ leads that  
\begin{align*}
 a^{-d}  \exp_{\varphi}(\lambda-ca^{-2}|x|^2) =\exp_{\psi}(\lambda(a)-c(a)a^{-2} |x|^2) 
\end{align*}
for any $x\in\R^d$, where we abbreviate $c:=c_\varphi(I_d), \lambda:=\lambda_\varphi(I_d)$ and $c(a):=c_\psi(a^{2} I_d), \lambda(a):=\lambda_\psi(a^{2} I_d)$.
Evaluating the above equation at $x\in\R^d$ with $|x|=ar$, we deduce that    
\begin{gather*}
a^{-d} \exp_{\varphi}(\lambda-cr^2 ) =   \exp_{\psi}(\lambda(a)-c(a)r^2), \qquad
b :=\sqrt{\frac{\lambda- l_{\varphi}}{c}}=\sqrt{\frac{\lambda(a)-l_{\psi}}{c(a)}}.
\end{gather*}
Differentiating this equation at $r\in I :=(0,b)$ yields that 
\[
 ca^{-d}\varphi ( \exp_{\varphi}(\lambda-cr^2) ) = c(a) \psi ( \exp_{\psi}(\lambda(a)-c(a)r^2 ) )   = c(a)  \psi  (  a^{-d} \exp_{\varphi}(\lambda-cr^2 )),
\] 
in particular,   $c \varphi ( \exp_{\varphi} (\lambda-cr^2 ) ) =c(1)  \psi  ( \exp_{\varphi}(\lambda-cr^2 ))$ for the case $a=1$.
This provides 
\begin{align}\label{yorazu}
\frac{\psi  (  a^{-d} \exp_{\varphi}(\lambda-cr^2 ) )}{\psi  ( \exp_{\varphi}(\lambda-cr^2 ))} 
=\frac{c(1)\psi  (  a^{-d} \exp_{\varphi}(\lambda-cr^2 ) )}{ c \varphi ( \exp_{\varphi} (\lambda-cr^2 ) )} 
=\frac{c(1)}{c(a)a^d} 
\end{align}
for any $r \in I$.
For any $\xi,\eta>0$ satisfying $\xi^2, \xi \eta <\exp_\varphi(\lambda)$, 
since it is possible to choose $s,r \in I$ such that $\xi^2=\exp_{\varphi}(\lambda-cr^2 )$ and $\xi \eta =\exp_{\varphi}(\lambda-c s ^2 )$, 
the choice $a^{-d}=\eta/\xi$ leads 
\[
\frac{\psi  (\xi\eta )}{\psi  ( \xi^2)} 
=\frac{\psi  (  a^{-d} \exp_{\varphi}(\lambda-cr^2 ) )}{\psi  ( \exp_{\varphi}(\lambda-cr^2 ))} 
=\frac{c(1)}{c(a)a^d}
=\frac{\psi  (\eta^2 )}{\psi  ( \xi\eta)}.
\]
Then for any $\alpha \in (0,  \min\{\exp_\varphi(\lambda)^{1/2},\exp_\varphi(\lambda)\})$, the function $\Psi(\xi):={\psi(\xi \alpha^2)}/{\psi(\alpha^2)}$
satisfies  that $\Psi(\xi \eta)=\Psi(\xi)\Psi(\eta)$ for any $\xi, \eta \in (0,\alpha^{-1/2}\exp_\varphi(\lambda)^{1/2})$.
By induction with the condition $\alpha<\exp_\varphi(\lambda)$, we find that $\Psi(\xi \eta)=\Psi(\xi)\Psi(\eta)$ for any $\xi, \eta >0$.
Hence the continuous function $L(s):=\ln \Psi(e^s) $ on $\R$ satisfies  Cauchy's  functional equation, namely $L(s+t)=L(s)+L(t)$.
This means that $L(s)=L(1)s$, in turn $\alpha\varphi(s)=\beta\psi(s)=s^q$ for some $\alpha,\beta>0$ and $q \in Q_d$.
$\qedd$
\end{proof}
\begin{remark}
Theorem~\ref{icchi} yields that for $\varphi, \psi \in\Ord(2/(d+2))$ with $d \geq 2$,  
we have $G_{\varphi} (0,a^2 I_d)=N_{\psi}(0,a^2 I_d)$ for any $a>0$   
if and only if $\G_{\varphi}$ and $\NN_{\psi}$ are either the space of Gaussian measures, or the space of $q$-Gaussian measures.
\end{remark}
%
Both the spaces $\G_{\varphi}$ and $\NN_{\varphi}$ are generalizations of the space of Gaussian measures,  
however they differ from each other in general.
On one hand, the space $\NN_\varphi$ has a dually flat structure as seen in~\cite{N}
(we do not refer to a dually flat structure, see~\cite{Amari}).
On the other hand, the space $\G_\varphi$ is convex in Wasserstein geometry.

\begin{proposition}\label{isom}
For any $\varphi\in\Ord(2/(d+2))$ with $d \geq 2$, the space $\G_{\varphi}$ is convex and isometric to the space $\G$ of Gaussian measures 
in the sense of  Wasserstein geometry.
\end{proposition}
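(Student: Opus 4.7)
The plan is to produce, for each pair of elements of $\G_\varphi$, an explicit affine optimal transport map whose quadratic cost coincides with the Wasserstein distance between the corresponding Gaussians, and to observe that its displacement interpolation remains in $\G_\varphi$. The key structural observation is that $\rho_0 := G_\varphi(0, I_d)$ is a radially symmetric density with zero mean and covariance $I_d$ (Proposition~\ref{kesu} with $v = 0$, $V = I_d$), and the change of variables recorded in Remark~\ref{rem} gives $G_\varphi(v, V) = (V^{1/2}\,\cdot\, + v)_\sharp \rho_0$. Thus every measure in $\G_\varphi$ is the affine image of the single radial density $\rho_0$.

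Given $(v_0, V_0), (v_1, V_1) \in \R^d \times \Sym(d, \R)_+$, mimicking the Gaussian case I would set
\[
\Sigma := V_0^{-1/2}(V_0^{1/2} V_1 V_0^{1/2})^{1/2} V_0^{-1/2} \in \Sym(d, \R)_+, \qquad T(x) := \Sigma(x - v_0) + v_1,
\]
so that $\Sigma V_0 \Sigma = V_1$ and $T$ is the gradient of a convex quadratic. To apply Theorem~\ref{thm:bre} it suffices to show $T_\sharp G_\varphi(v_0, V_0) = G_\varphi(v_1, V_1)$, which reduces to checking that $y \mapsto \Sigma V_0^{1/2} y$ pushes $\rho_0$ forward to $(V_1^{1/2}\,\cdot\,)_\sharp \rho_0$. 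This is immediate from the polar decomposition $\Sigma V_0^{1/2} = V_1^{1/2} O$, with $O$ orthogonal (forced by $(\Sigma V_0^{1/2})(\Sigma V_0^{1/2})^T = \Sigma V_0 \Sigma = V_1$), together with the radial invariance $O_\sharp \rho_0 = \rho_0$. Brenier's theorem then identifies $T$ as the unique optimal map.

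The Wasserstein cost is computed by substituting $x = V_0^{1/2} y + v_0$; the zero mean of $\rho_0$ kills the cross term and its identity covariance turns the quadratic term into $\tr((\Sigma - I_d) V_0 (\Sigma - I_d))$, which expanded using $\Sigma V_0 \Sigma = V_1$ and $\tr(\Sigma V_0) = \tr((V_0^{1/2} V_1 V_0^{1/2})^{1/2})$ yields
\[
W_2^2\bigl(G_\varphi(v_0, V_0), G_\varphi(v_1, V_1)\bigr) = |v_1 - v_0|^2 + \tr\Bigl(V_0 + V_1 - 2(V_0^{1/2} V_1 V_0^{1/2})^{1/2}\Bigr),
\]
precisely the Wasserstein distance between $G(v_0, V_0)$ and $G(v_1, V_1)$; hence the parametrization $(v, V) \mapsto G_\varphi(v, V)$ transfers the Wasserstein metric of $\G$ onto $\G_\varphi$. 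For convexity, Theorem~\ref{thm:bre} realizes the unique geodesic as $((1-t)\id + tT)_\sharp G_\varphi(v_0, V_0)$, which by the same polar-decomposition argument equals $G_\varphi(v_t, V_t)$ with $v_t := (1-t) v_0 + t v_1$ and $V_t := ((1-t) I_d + t\Sigma) V_0 ((1-t) I_d + t\Sigma) \in \Sym(d, \R)_+$, so the curve stays in $\G_\varphi$. The only delicate step is the push-forward identity $T_\sharp G_\varphi(v_0, V_0) = G_\varphi(v_1, V_1)$; once radial symmetry of $\rho_0$ is exploited via the polar decomposition, the rest is the classical Gaussian Wasserstein computation.
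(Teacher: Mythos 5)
Your proof is correct and follows essentially the same route as the paper: your matrix $\Sigma = V_0^{-1/2}(V_0^{1/2}V_1V_0^{1/2})^{1/2}V_0^{-1/2}$ is exactly the paper's $W=U^{1/2}(U^{1/2}VU^{1/2})^{-1/2}U^{1/2}$ (both being the unique symmetric positive definite solution of $SV_0S=V_1$), and you invoke the same Brenier-type theorem, obtain the same cost formula, and exhibit the same geodesic $G_\varphi(v_t,V_t)$. The only cosmetic difference is that you verify the push-forward identity by factoring through the radial density $\rho_0$ and a polar decomposition, whereas the paper checks the density change-of-variables relation $g_\varphi(v,V)=g_\varphi(u,U)(\nabla\phi_W)\det\Hess\phi_W$ directly; both verifications are valid.
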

\begin{proof}
For $(v,V), (u,U) \in \R^d\times\Sym(d,\R)_+$, we define the symmetric positive definite matrix $W$ and the associated convex function $\phi_W$ by
\begin{gather*}
 W:=U^{\frac12}(U^{\frac12}V U^{\frac12})^{-\frac12}U^{\frac12}, 
\qquad
\phi_W (x) :=\frac{1}{2} \langle x-v,W(x-v)\rangle +\langle x,u\rangle.
\end{gather*}
Then $\phi_W$ is convex and satisfies $[\nabla \phi_W]_\sharp G_\varphi(v,V)=G_\varphi(u,U)$ 
since for $G_\varphi(v,V)$-almost everywhere we have $g_\varphi(v,V)= g_\varphi(u,U)(\nabla \phi_W)(\det \Hess \phi_W)$.
Theorem~\ref{thm:bre}  yields that $[\id \times \nabla \phi_W]_{\sharp} G_\varphi(v,V)$ is an optimal coupling of $G_\varphi(v,V)$ and $G_\varphi(u,U)$, 
consequently  
\begin{align*}
W_2(G_\varphi(v,V),G_\varphi(u,U))^2 
&=|v-u|^2 +\tr V + \tr U 
 -2 \tr \left( U^{\frac12}V U^{\frac12} \right)^{\frac12} \\
&=W_2(G(v,V),G(u,U))^2,   
\end{align*}
where $G(v,V)$ stands for the Gaussian measure with mean $v$ and covariance matrix $V$.
(Note that  the optimality of the function $\phi_W$ in the case of the Gaussian measures has been known, for instance, see~\cite{T} and references therein.)
Hence the map $G(v,V) \mapsto G_{\varphi}(v,V)$ is an isometry from $\G$ to $\G_{\varphi}$ in Wasserstein geometry.
We also deduce from Theorem~\ref{thm:bre} that, for the time-dependent vector $\{w_t\}_{t\in[0,1]}$ and the time-dependent matrix $\{W_t\}_{t\in[0,1]}$ defined by 
\[
w_t:=(1-t)v+t u,\qquad
W_t:=[(1-t)I_d+t W]V[(1-t)I_d+t W],
\]
$\{G_\varphi(w_t,W_t)\}_{t \in [0,1]}$ is a unique Wasserstein geodesic from $G_\varphi(v,V)$ to $G_\varphi(u,U)$.
Hence the space $\G_{\varphi}$ is convex in Wasserstein geometry.
$\qedd$
\end{proof}

Let us now consider the completions of $\NN_{\varphi}$ as a metric space with respect to $W_2$, denoted by $\overline{\NN_{\varphi}}$, 
and prove that $\overline{\NN_{\varphi}}$ is generally not convex in Wasserstein geometry.
By Proposition~\ref{isom}, the completion of $\G_{\varphi}$, denoted  by $\overline{\G_{\varphi}}$, is isometric to the completion of  $\G$, 
which is homeomorphic to $\R^d \times \Sym(d,\R)_{\geq 0}$, 
where $\Sym(d,\R)_{\geq 0}$ is the set of all symmetric non-negative definite matrices of size $d$ (see~\cite[Section~4]{T}).
We similarly show that $\overline{\NN_{\varphi}}$ is homeomorphic to $\R^d \times \Sym(d,\R)_{\geq 0}$, whose proof is simple  but tedious and we omit it.
The key of the proof is the fact that 
the weak convergence, which is equivalent to the pointwise convergence of  characteristic functions, is weaker than the convergence in Wasserstein geometry (see~\cite[Theorem~7.12]{Vi1}). 
Hence elements of  $\overline{\G_{\varphi}}$ and $\overline{\NN_{\varphi}}$ 
are naturally denoted by $G_\varphi(v,V)$ and $N_{\varphi} (v,V)$ with some $(v,V )\in \R^d \times \Sym(d,\R)_{\geq0}$, respectively. 
Note that any elements in  $\overline{\G_{\varphi}}\setminus \G_{\varphi} $ and $\overline{\NN_{\varphi}} \setminus \NN_{\varphi}$ are singular with respect to the Lebesgue measure  and 
$G_\varphi(v,0)=N_{\varphi} (v,0)=\delta_v$, which is the Dirac measure centered at $v$.

\begin{proposition}\label{th:cvx}
The space  $\overline{\NN_{\varphi}}$ is convex in Wasserstein geometry if and only if $\alpha\varphi(s)= s^q$ with some $\alpha >0$ and $q\in Q_d$.
\end{proposition}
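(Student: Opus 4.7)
If $\alpha\varphi(s)=s^q$ with $q\in Q_d$, the rescaling identities $N_{\alpha\varphi}=N_\varphi$ and $G_{\alpha\varphi}=G_\varphi$ established in the ``if'' part of Theorem~\ref{icchi}, together with the agreement $N_{s^q}=G_{s^q}$ recalled there from \cite[Section~4]{qT}, give $\NN_\varphi=\G_\varphi$. Proposition~\ref{isom} then tells us this common space is convex in Wasserstein geometry, and convexity passes to the completion $\overline{\NN_\varphi}=\overline{\G_\varphi}$.

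\textbf{Necessity.} Assume $\overline{\NN_\varphi}$ is convex. The plan is to test this against the single explicit Wasserstein geodesic from $\mu_0:=N_\varphi(0,I_d)$ to $\mu_1:=\delta_0\in\overline{\NN_\varphi}$. Setting $\lambda:=\lambda_\varphi(I_d)$, $c:=c_\varphi(I_d)$ and applying Theorem~\ref{thm:bre} with the convex function $\phi\equiv0$ identifies this geodesic uniquely as $\mu_t:=[(1-t)\id]_\sharp\mu_0$, which for $t\in[0,1)$ is absolutely continuous with density $\rho_t(y)=(1-t)^{-d}\exp_\varphi(\lambda-c|y|^2/(1-t)^2)$, mean $0$, and covariance $(1-t)^2 I_d$. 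Convexity forces $\mu_t\in\overline{\NN_\varphi}$, absolute continuity narrows this to $\mu_t\in\NN_\varphi$, and moment matching gives $\mu_t=N_\varphi(0,(1-t)^2 I_d)$. Writing $s:=|y|^2/(1-t)^2$ and $\kappa:=(1-t)^{-d}\in[1,\infty)$ and equating the two density formulas yields
\[
\kappa\exp_\varphi(\lambda-cs)=\exp_\varphi\!\bigl(\lambda_\varphi((1-t)^2I_d)-c_\varphi((1-t)^2I_d)\,s\bigr).
\]
Differentiating in $s$, using $(\exp_\varphi)'=\varphi\circ\exp_\varphi$, and substituting back eliminates the unknown right-hand exponential and leaves
\[
\varphi(\kappa\xi)=B(\kappa)\,\varphi(\xi),\qquad \xi=\exp_\varphi(\lambda-cs)\in(0,\exp_\varphi(\lambda)),\ \kappa\in[1,\infty),
\]
for some function $B$ of $\kappa$ alone.

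The final step promotes this to a global multiplicative identity and reduces to Cauchy. Iterating over $\kappa_1,\kappa_2\ge1$ with $\xi$ small enough at each step to keep the intermediate argument admissible gives $B(\kappa_1\kappa_2)=B(\kappa_1)B(\kappa_2)$, so by continuity $u\mapsto\ln B(e^u)$ is additive and linear, $B(\kappa)=\kappa^q$ for some $q\in\R$, and hence $\varphi(s)=\varphi(1)s^q$ first on $(0,\exp_\varphi(\lambda))$ and then, by re-applying the multiplicative identity, on all of $(0,\infty)$. Setting $\alpha:=1/\varphi(1)$ gives $\alpha\varphi(s)=s^q$, with $q\in Q_d$ forced by $\varphi\in\Ord(2/(d+2))$.

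\textbf{Main obstacle.} The crux is the bootstrapping step upgrading the restricted multiplicative identity to one valid on $(0,\infty)^2$, which legitimates reducing to Cauchy's equation; it closely parallels the final Cauchy-equation argument at the end of the proof of Theorem~\ref{icchi}. A minor technicality is the case $L_\varphi<\infty$, where the support of $N_\varphi(0,I_d)$ is a bounded ball and the derivation must be restricted to that support, but this does not affect the extraction or solution of the functional equation.
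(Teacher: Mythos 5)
Your proof is correct and rests on the same geometric idea as the paper's: probe the convexity hypothesis with the unique Wasserstein geodesic joining $N_\varphi(0,I_d)$ to the Dirac mass $\delta_0=N_\varphi(0,0)$, whose intermediate points are the dilations $[(1-t)\id]_\sharp N_\varphi(0,I_d)=G_\varphi(0,(1-t)^2I_d)$, and force these to lie in $\NN_\varphi$. Where you diverge is in the endgame. The paper extends this geodesic to a ray on $[0,\infty)$ and uses the non-branching property of Wasserstein geodesics to obtain $G_\varphi(0,a^2I_d)=N_\varphi(0,a^2I_d)$ for \emph{all} $a>0$, at which point Theorem~\ref{icchi} (with $\psi=\varphi$) applies verbatim and the proof is three lines. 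You stop at $a\in(0,1]$ (equivalently $\kappa=(1-t)^{-d}\in[1,\infty)$), which puts you outside the literal hypotheses of Theorem~\ref{icchi}, so you re-derive the scaling identity $\varphi(\kappa\xi)=B(\kappa)\varphi(\xi)$ and the Cauchy-equation bootstrap from scratch. This works --- the restricted identity with $\kappa\geq1$ and $\xi$ arbitrarily small already pins down $\varphi(s)=Cs^q$ on all of $(0,\infty)$, and $B(\kappa)=c\kappa/c_\varphi(\kappa^{-2/d}I_d)$ is continuous because $c_\varphi$ is --- but it duplicates, almost line by line, the ``only if'' half of Theorem~\ref{icchi}; the paper's route buys a much shorter argument at the cost of the (easy) geodesic-extension and non-branching step. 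Two small points to tighten: the lower bound $q>0$ in $q\in Q_d$ comes from $\varphi$ being increasing and positive rather than from $\varphi\in\Ord(2/(d+2))$, which only yields $q<(d+4)/(d+2)$; and your step ``absolute continuity narrows this to $\mu_t\in\NN_\varphi$'' tacitly uses that every element of $\overline{\NN_{\varphi}}\setminus\NN_{\varphi}$ is singular with respect to the Lebesgue measure, a fact recorded in the paper just before the proposition and worth citing explicitly.
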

\begin{proof}
Since the ``if" part is trivially true, we assume the convexity of $\overline{\NN_{\varphi}}$.  
Then a unique geodesic $\sigma$ from $N_{\varphi} (0,0)=G_\varphi(0,0)$ to $N_{\varphi}(0,I_d)=G_\varphi(0,I_d)$ is contained in $\overline{\NN_{\varphi}} $ and $\overline{\G_{\varphi}}$, which is written as $\sigma(t)=G_{\varphi}(0, t^2I_d)$ for $t\in[0,1]$ and can be extended to $[0,\infty)$.
The non-branching property in Wasserstein geometry ensures that $G_{\varphi}(0,t^2I_d) \in \G_{\varphi} \cap \NN_{\varphi}$ for any $t>0$ and by Theorem~\ref{icchi}, 
$\alpha\varphi(s)=s^{q}$ holds for some $\alpha>0$ and $q \in Q_d$. 
$\qedd$
\end{proof}

\section{Stability under an evolution equation}
We finally discuss when the space of $\varphi$-exponential distributions is stable  under the evolution equation of the form 
\begin{equation}\label{eq:flow}
\frac{\partial}{\partial t} \rho= \div \big( \rho \nabla (\ln_{\varphi} (\rho) +\Psi_\varphi) \big)  
                                        = \div\left( \frac{\rho \nabla \rho}{ \varphi ( \rho)} + \rho \nabla\Psi_\varphi \right), \qquad
\Psi_\varphi(x)=c_\varphi(I_d) |x|^2.
\end{equation}
We mention that this evolution equation recovers the Fokker--Planck equation 
(resp.\ the nonlinear evolution equation of porous medium type) when $\varphi(s)=s$ 
(resp.\  $\varphi(s)=s^q$  with some $q\in Q_d $ except for $q =1$).

A set of functions is said to be {\it stable} under an evolution equation 
if any solution to the evolution equation with initial data being  an element of the set stays inside the set.
A set of absolutely continuous measures with respect to the Lebesgue measure is said to be {\it stable} under an evolution equation 
if the set generated by densities of elements in the given set is stable.

\begin{theorem}\label{th:stable}
Assume $\varphi(0)=0$.
Then the space $\NN_{\varphi}$ is stable under \eqref{eq:flow} if and only if  $\alpha\varphi(s)=s^q$ with some $\alpha>0$ and $q \in Q_d$.
\end{theorem}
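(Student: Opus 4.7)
The plan is to establish the ``if'' direction as a reduction to the known porous-medium case, and the ``only if'' direction by substituting an isotropic ansatz into~\eqref{eq:flow} to force a rigid affine identity on $\varphi$.

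For the ``if'' direction, I would use the rescaling identities $\ln_{\alpha\varphi}=\alpha^{-1}\ln_\varphi$ and $\exp_{\alpha\varphi}(\tau)=\exp_\varphi(\alpha\tau)$, together with $\lambda_{\alpha\varphi}(V)=\alpha^{-1}\lambda_\varphi(V)$ and $c_{\alpha\varphi}(V)=\alpha^{-1}c_\varphi(V)$ established in the proof of Theorem~\ref{icchi}, to show that $\NN_{\alpha\varphi}=\NN_\varphi$ and that \eqref{eq:flow} for $\alpha\varphi$ is a constant time-rescaling of \eqref{eq:flow} for $\varphi$. It then suffices to treat $\varphi(s)=s^q$ with $q\in Q_d$, which is precisely~\cite[Theorem~A]{qT}.

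For the ``only if'' direction, I would fix $a_0>0$ and take $\rho_0:=n_\varphi(0,a_0^2 I_d)$ as initial datum. Since~\eqref{eq:flow} is $O(d)$-invariant, the solution stays rotation-invariant, and stability of $\NN_\varphi$ forces
\[
\rho(t,x)=\exp_\varphi\bigl(\lambda(t)-c(t)|x|^2\bigr),\qquad \lambda(t):=\lambda_\varphi(a(t)^2 I_d),\ \ c(t):=c_\varphi(a(t)^2 I_d)\,a(t)^{-2},
\]
for a differentiable scalar path $a(t)$ with $a(0)=a_0$. A direct computation using $(\exp_\varphi)'=\varphi\circ\exp_\varphi$ gives
\[
\partial_t\rho=\varphi(\rho)[\dot\lambda-\dot c|x|^2],\qquad \div\bigl(\rho\nabla(\ln_\varphi\rho+\Psi_\varphi)\bigr)=2(c_\varphi(I_d)-c)\bigl[d\rho-2c\varphi(\rho)|x|^2\bigr].
\]
Equating, dividing by $\varphi(\rho)$, and substituting $|x|^2=(\lambda-\tau)/c$ with $\tau:=\lambda(t)-c(t)|x|^2$ produces the identity
\[
F(\tau):=\frac{\exp_\varphi(\tau)}{\varphi(\exp_\varphi(\tau))}=A(t)+B(t)\,\tau\qquad\text{for every }\tau\in(l_\varphi,\lambda(t)],
\]
where $A(t),B(t)$ are explicit scalars built from $\lambda,\dot\lambda,c,\dot c$, and $c_\varphi(I_d)$. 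Because $F$ depends only on $\varphi$, two such affine representations on overlapping subintervals must coincide; letting $a_0$ vary and using that $\lambda_\varphi(a_0^2 I_d)$ sweeps all of $(l_\varphi,L_\varphi)$ (as in the proof of Proposition~\ref{kesu}), these pieces glue into a global identity $F(\tau)=A+B\tau$ on $(l_\varphi,L_\varphi)$.

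The final step is to invert this into a statement about $\varphi$. Writing $y(\tau):=\exp_\varphi(\tau)$, the relation $y/y'=A+B\tau$ integrates to $y(\tau)=C(A+B\tau)^{1/B}$ when $B\neq0$ and to $y(\tau)=Ce^{\tau/A}$ when $B=0$. Substituting back into $\varphi(y)=y'$ gives $\varphi(s)=C^Bs^{1-B}$ or $\varphi(s)=s/A$, so in either case $\alpha\varphi(s)=s^q$ for some $\alpha>0$; the hypothesis $\varphi\in\Ord(2/(d+2))$ then forces $q\in Q_d$. The main obstacle I foresee is the regularity step preceding the functional equation: justifying that the stability assumption produces enough smoothness of the path $a(t)$ to differentiate in time at $t=0$, and verifying that the flow is nonstationary for $a_0\neq1$ so that the affine identity is nonvacuous. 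I expect to handle these via standard parabolic well-posedness of~\eqref{eq:flow} in the radial class together with strict decrease of the free energy functional associated with~\eqref{eq:flow}.
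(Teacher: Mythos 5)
Your proposal is correct and, at its core, runs on the same mechanism as the paper's proof: substitute the ansatz into \eqref{eq:flow}, conclude that $\tau\mapsto\exp_\varphi(\tau)/\varphi(\exp_\varphi(\tau))$ is affine on the relevant interval, integrate this first-order ODE to get $\alpha\varphi(s)=s^q$ on $(0,\exp_\varphi(\lambda))$, and push $\lambda\uparrow L_\varphi$ by shrinking the covariance so the identity holds on all of $(0,\infty)$. Where you differ is in how the affine identity is extracted. The paper works with a general matrix path $V_t$, introduces a chart on $\NN_\varphi(0):=\{n_\varphi(0,X)\}$ together with the derivative matrices $\Lambda$ and $C$ of $\lambda_\varphi\circ\sigma$ and $c_\varphi\circ\sigma$, and computes $\tfrac{d}{dt}\,n_\varphi(V_t)(x)$ in two ways to reach \eqref{1226}; you instead restrict to isotropic initial data, invoke $O(d)$-invariance of \eqref{eq:flow} to reduce to a scalar path $a(t)$, and compute both sides of the PDE directly. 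Your route is more elementary and avoids the $\Lambda,C$ bookkeeping; the small price is that you must glue the affine identities obtained for varying $a_0$, which is harmless because all the intervals share the left endpoint $l_\varphi$ and an affine function is determined by its restriction to any subinterval. Two further remarks. For the ``if'' direction the paper simply cites \cite[Proposition~5]{OW}, whereas your explicit reduction of $\alpha\varphi$ to $\varphi$ by a constant time-rescaling is a welcome extra detail (the paper carries out the analogous reduction only in the proof of Theorem~\ref{icchi}). Finally, your worry about nonstationarity is genuine but resolvable, and the paper has its exact analogue (the division by $\tr A_t$ defining $\beta$ and $\gamma$ is never justified): for $a_0\neq1$ one necessarily has $c(0)\neq c_\varphi(I_d)$, since $c(0)=c_\varphi(I_d)$ would make $\rho_0$ a stationary normalized density, and the monotonicity of $\lambda\mapsto f_\varphi((d-2)/2,\lambda)$ forces the normalizing constant, hence the state, to be $n_\varphi(0,I_d)$; so the identity you derive is nonvacuous exactly where you need it.
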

\begin{proof} 
Since the ``if" part has been already shown in~\cite[Proposition~5]{OW},  it is enough to prove the ``only if" part, 
which is done in a similar way to the proof of ~\cite{OW}.

Assume the stability of $\NN_\varphi$.
Then the solution to~\eqref{eq:flow} with initial data $n_{\varphi}(0,V) \in \NN_\varphi$ is written as $n_\varphi(v_t, V_t)$ for 
some time-dependent vector $v_t \in \R^d$ and some time-dependent matrix $V_t \in  \Sym(d,\R)_+$.
The following calculation can be justified; 
\begin{align*}
\frac{d}{dt} v_t
=\int_{\R^d}  x \frac{\partial} {\partial t} n_{\varphi}(v_t,V_t)  d\LL^d 
&=- \int_{\R^d}  n_{\varphi}(v_t,V_t)  \nabla \left( \ln_{\varphi} (n_{\varphi} (v_t,V_t) ) +\Psi_\varphi \right) d\LL^d  
=-2c_\varphi(I_d) v_t,
\end{align*}
which implies that $v_t$ is identically the zero vector.
We similarly compute 
\begin{align}\label{mat}
\frac{d}{dt}V_t=4 A_tV_t, \qquad
\frac{d}{dt} V_t^{-1} =-4 V_t^{-1}A_t, \qquad
A_t:=c(V_t)V_t^{-1}-c_\varphi(I_d)I_d.
\end{align}

In what follows,  we abbreviate $n_\varphi (0,X)$ as $n_\varphi (X)$.
Let $\partial_{ij}$ be the tangent vector  on $\NN_\varphi(0):= \{n_\varphi(X)\ |\ X\in\Sym(d,\R)_+\}$ 
associated to a global chart $(\Sym(d,\R)_+, \Xi=(\Xi_{ij})_{1 \leq i \leq j\leq d} )$ given by $\Xi(X):=n_{\varphi}(X^{-1})$.
We define the map  $\sigma$ on $\Sym(d,\R)_+$ by $\sigma(X)=X^{-1}$  and  
the symmetric matrices  $\Lambda,C$ of size $d$ by 
\[
 \Lambda
:=\begin{cases}
\partial_{ij}  (\lambda_\varphi \circ \sigma) \big|_{V_t^{-1}} & \text{if\ }i=j, \\
\displaystyle \frac12\partial_{ij} (\lambda_\varphi \circ \sigma) \big|_{V_t^{-1}} & \text{if\ }i\neq j,
\end{cases} 
\qquad
C
:=\begin{cases}
 \partial_{ij} (c_\varphi \circ \sigma) \big|_{V_t^{-1}} & \text{if\ }i=j, \\
 \displaystyle \frac12\partial_{ij} (c_\varphi \circ \sigma) \big|_{V_t^{-1}} & \text{if\ }i\neq j.
\end{cases} 
\]
Given any $x\in \R^d$, we set the function $e_x$ on $\NN_\varphi(0)$ as $e_x(n_\varphi (X))=n_{\varphi}(X)(x)$, which is $C^1$ owing to $\varphi(0)=0$.
Then for the solution $n_\varphi(V_t)$ to~\eqref{eq:flow}, we have 
\begin{align*}
  \frac{d}{dt}  e_x( n_\varphi (V_t) ) 
=\frac{\partial}{\partial t} n_\varphi (V_t)(x) 
&=\div\left( n_\varphi (V_t) \nabla ( \ln_\varphi ( n_\varphi (V_t) )+ \Psi_\varphi )\right) (x)  \\
&=4\varphi( n_{\varphi}(V_t) (x)) c(V_t) \lr{x}{V_t^{-1}A_t x} - 2 n_{\varphi}(V_t)(x) \tr (A_t).   
\end{align*}
On the other hand,  by using~\eqref{mat},  a direct computation provides   
\begin{align*}
\frac{d}{dt}  e_x( n_\varphi(V_t)) 
&=\sum_{1 \leq i \leq j\leq d}\partial_{ij} ( e_x \circ \Xi_{ij} ) \big|_{V_t^{-1}} \frac{ d \lr{e_i}{V_t^{-1}e_j} }{dt} \\
&=-4\varphi( n_{\varphi}(V_t)(x) )\left[  \tr  ( V_t^{-1}A_t \Lambda ) - \tr  (V_t^{-1}A_t  C ) |x|_{V_t}^2  - c_\varphi(V_t) \lr{x}{V_t^{-1}A_t x} \right].  
\end{align*}
We therefore obtain 
\begin{align}\label{1226}
 n_{\varphi}(V_t)(x) \tr (A_t)
=2\varphi( n_{\varphi}(V_t) (x)) \left[ \tr  ( V_t^{-1}A_t \Lambda ) - \tr  (V_t^{-1}A_t  C )|x|_{V_t}^2) \right]
\end{align}
for any $x\in \R^d$.
Set 
\[ \beta:=\frac{2\tr ( V_t^{-1}A_t \Lambda )}{ \tr (A_t)}, \qquad 
    \gamma:=\frac{ 2  \tr  (V_t^{-1}A_t  C )}{ c_\varphi(V_t) \tr ( A_t)}, \qquad 
    J_{V_t}:=(0, \lambda_\varphi(V_t)-l_\varphi).
\]    
At $x\in \R^d$ with $c_\varphi(V_t) |x|_{V_t}^2=r \in J_{V_t}$,  the equation~\eqref{1226} is deformed as 
\begin{align*}
 \exp_{\varphi}(\lambda_\varphi(V_t)-r) 
=\varphi( \exp_{\varphi} (\lambda_\varphi(V_t)-r)) \left( \beta + \gamma r  \right) 
=-\left( \frac{d}{dr} \exp_{\varphi}(\lambda_\varphi(V_t)-r) \right) (\beta+\gamma r)  
\end{align*}
and integrating it with respect to $r \in J_{V_t}$ yields     
\[
\exp_{\varphi}(\lambda_\varphi(V_t)-r) 
=\exp_{\varphi}(\lambda_\varphi(V_t)) 
\left( 1+\frac{\gamma r}{\beta}  \right)^{-\frac{1}{\gamma}}.  
\]
Taking $\ln_{\varphi}$  and differentiating it in $r$, we have 
\[
\beta \exp_{\varphi}(\lambda_\varphi(V_t) )^{\gamma} \varphi(s)= s^{1+\gamma},
\]
where we set $s=s(r):=\exp_{\varphi}(\lambda_\varphi(V_t)-r)$.
This implies that $\alpha\varphi(s)= s^{q}$ holds on $(0,\exp_\varphi(\lambda(V_t)))$
with $\alpha=\alpha(V_t):=\beta \exp_{\varphi}(\lambda_\varphi(V_t) )^{\gamma} $ and $q=q(V_t):=1+\gamma$.
We mention that $\alpha$ and $q$ do not depend on $V_t$ 
since we have $\varphi(s)=\alpha(V_t)^{-1} s^{q(V_t)}=\alpha(U_t)^{-1} s^{q(U_t)}$ on $(0, \min\{\exp_\varphi(\lambda(V_t)),\exp_\varphi(\lambda(U_t))\} ) \neq \emptyset$ for any $V,U \in \Sym(d,\R)_+$. 
Letting $V \to 0$ and  $t \to 0$,  we have  $\lambda_\varphi(V_t)  \to L_\varphi$ then 
$\alpha \varphi(s)= s^{q}$ for any $s>0$.
$\qedd$
\end{proof}

The same result holds for $\G_{\varphi}$.
The proof is exactly similar  but tedious and we omit it. 
\begin{corollary}\label{cor}
Let $\varphi$ be $C^1$ and $\varphi(0)=0$. 
Then  the space $\G_{\varphi}$ is stable under \eqref{eq:flow} if and only if  $\alpha \varphi(s)=s^q$ with some $\alpha>0$ and $q \in Q_d$.
\end{corollary}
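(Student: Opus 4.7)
The plan is to follow the argument of Theorem~\ref{th:stable} step by step, replacing $n_\varphi(v,V)$ with $g_\varphi(v,V)$ throughout. The essential new feature is the Jacobian factor $\kappa(V):=(\det V)^{-1/2}$ in $g_\varphi(0,V)(x)=\kappa(V) h(x)$ with $h(x):=\exp_\varphi(\lambda_\varphi(I_d)-c_\varphi(I_d)|x|_V^2)$, which destroys the clean identity $\ln_\varphi n_\varphi(V)=\lambda_\varphi(V)-c_\varphi(V)|x|_V^2$ exploited in Theorem~\ref{th:stable}; throughout the computation $\varphi(\kappa h)$ and $\varphi(h)$ will appear separately, since they no longer coincide when $\kappa\neq 1$.

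The ``if'' direction is again covered by the $q$-Gaussian case, so I focus on the converse. Assuming the stability of $\G_\varphi$, I write a solution of \eqref{eq:flow} starting at $g_\varphi(0,V_0)$ as $g_\varphi(v_t,V_t)$. An evenness argument on the first moment forces $v_t\equiv 0$, and a second-moment computation yields a matrix ODE $\dot V_t=4 B_t V_t$ for some symmetric $B_t$ (whose form is more involved than $A_t$ of Theorem~\ref{th:stable} because the integrations pick up both $\kappa(V_t)$ and the values $\varphi(h)$, $\varphi(\kappa(V_t) h)$). Next I introduce the chart $\Xi(X):=g_\varphi(X^{-1})$ on $\G_\varphi(0):=\{g_\varphi(0,X)\mid X\in\Sym(d,\R)_+\}$ and the evaluation functional $e_x$, and compute $\partial_t g_\varphi(V_t)(x)$ in two ways. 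Parametric differentiation yields
\[
\partial_t g_\varphi(V_t)(x)=-2\, g_\varphi(V_t)(x)\,\tr(B_t)+4\, c_\varphi(I_d)\,\kappa(V_t)\,\varphi(h)\,\langle x,V_t^{-1}B_t x\rangle,
\]
while the PDE side, using $\nabla\ln_\varphi g_\varphi=\nabla g_\varphi/\varphi(g_\varphi)$ and $g_\varphi=\kappa h$, is the divergence of the flux $2 c_\varphi(I_d) g_\varphi[I-\rho V_t^{-1}]x$ with $\rho:=\kappa\varphi(h)/\varphi(\kappa h)$. The $C^1$ hypothesis on $\varphi$ is used precisely to differentiate $\rho$ in $x$ inside the divergence.

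Equating the two expressions at points $x$ with $c_\varphi(I_d)|x|_{V_t}^2=r$ in a suitable interval, I expect to obtain, after collecting the $|x|_{V_t}^2$ and $\langle x,V_t^{-2}x\rangle$ contributions and matching against the symmetric freedom in $B_t$, a scalar radial ODE of the same shape as the one following equation~\eqref{1226}, namely $s=-s'(r)(\beta+\gamma r)$ for $s(r)=\exp_\varphi(\lambda_\varphi(I_d)-r)$ with constants $\beta,\gamma$ depending on $V_t$ and $\kappa(V_t)$. Solving this ODE and differentiating $\ln_\varphi s$ in $r$ forces $\alpha\varphi(s)=s^q$ on the range of $h$. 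Finally, since $\kappa(V)=(\det V)^{-1/2}$ exhausts $(0,\infty)$ as $V$ varies over $\Sym(d,\R)_+$, the ranges of $g_\varphi(0,V)=\kappa(V) h$ cover all of $(0,\infty)$, so that $\alpha$ and $q$ must be independent of $V$ and the identity extends to every $s>0$ with $q\in Q_d$.

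The main obstacle is the third step. Because $\ln_\varphi g_\varphi(0,V)$ is not quadratic, the divergence on the PDE side produces additional terms involving $\rho$, $\rho'$, and the quadratic form $\langle x,V^{-2}x\rangle$, genuinely distinct from $|x|_V^2$. Showing that once $B_t$ is chosen to accommodate the parametric side all these additional contributions collapse back into a single one-variable ODE in $r$---so that no information beyond ``$\varphi$ is a power function'' is extracted but also none is missed---is the ``tedious'' bookkeeping that the author elects to suppress.
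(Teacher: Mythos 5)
Your setup is the right one and matches what ``exactly similar'' has to mean: write the solution as $g_\varphi(v_t,V_t)=\kappa(V_t)h$ with $\kappa(V)=(\det V)^{-1/2}$, kill $v_t$ by symmetry, derive a matrix ODE $\dot V_t=4B_tV_t$ from the second moments, and compare the parametric derivative of $e_x$ with the right-hand side of \eqref{eq:flow}; the identification of the flux as $2c_\varphi(I_d)\,g_\varphi\,[I-\rho V_t^{-1}]x$ with $\rho=\kappa\varphi(h)/\varphi(\kappa h)$, and the observation that the $C^1$ hypothesis is what lets you differentiate $\rho$ inside the divergence, are both correct. The ``if'' direction via the coincidence $\G_\varphi=\NN_\varphi$ for power $\varphi$ is also fine.

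However, the decisive step is missing: you write that you ``expect to obtain a scalar radial ODE of the same shape as the one following equation~\eqref{1226}, namely $s=-s'(r)(\beta+\gamma r)$,'' and defer the verification as the author's suppressed bookkeeping. This is exactly the content of the proof, and the predicted shape is not what the computation produces. Because $\ln_\varphi(\kappa h)$ is no longer an affine function of $|x|^2_{V_t}$, the divergence generates the terms $\rho$ and $h\,\rho'(h)$ multiplying the quadratic form $\lr{x}{V_t^{-2}x}$, which on a level ellipsoid of $|x|^2_{V_t}$ is \emph{not} constant; matching quadratic forms against $\lr{x}{V_t^{-1}B_tx}$ forces $B_t=\tau V_t^{-1}+\mu I_d$ and then a condition on the single-variable function $h\mapsto\rho(h)=\kappa\varphi(h)/\varphi(\kappa h)$ (first that $\tfrac{d}{dh}(h\rho)$ is constant, and then, after feeding this back into the radial part and using regularity at $r=0$, that $\rho$ itself is constant). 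The conclusion $\alpha\varphi(s)=s^q$ then comes from the resulting \emph{two-scale} multiplicative relation $\varphi(\kappa h)=\mathrm{const}\cdot\kappa\,\varphi(h)$ — i.e.\ a Cauchy-type functional equation as in the proof of Theorem~\ref{icchi} — not from integrating $s=-s'(\beta+\gamma r)$ as in Theorem~\ref{th:stable}. So the ``collapse'' you postpone both carries all the content and does not land where you say it will; as written, the argument has a genuine gap at its centre. (Your final globalization step, using that $\kappa(V)\exp_\varphi(\lambda_\varphi(I_d))$ exhausts $(0,\infty)$, is sound and in fact simpler than the limiting argument in Theorem~\ref{th:stable}.)
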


\end{document}